\newif\ifonlineapp 
\title{\LARGE \bf
On the Role of Data Homogeneity in Multi-Agent Non-convex Stochastic Optimization
}
\author{Qiang Li and Hoi-To Wai$^{1}$
\thanks{$^{1}$Qiang Li and Hoi-To Wai are with System Engineering \& Engineering Management, Faculty of Engineering,
        The Chinese University of Hong Kong, Shatin District, Hong Kong SAR. Emails:~\url{liqiang@se.cuhk.edu.hk}, \url{htwai@se.cuhk.edu.hk}}
}
\newtheoremstyle{exampstyle}
  {1.5\topsep} 
  {1.5\topsep} 
  {} 
  {} 
  {\bfseries} 
  {.} 
  {.5em} 
  {} 
\theoremstyle{exampstyle}
\newtheoremstyle{otherstyle}
  {1.5\topsep} 
  {1.5\topsep} 
  {\itshape} 
  {} 
  {\bfseries} 
  {.} 
  {.5em} 
  {} 
\theoremstyle{otherstyle}
\newtheorem{Corollary}{Corollary}
\newtheorem{Assumption}{A\!}
\let\NAT@parse\undefined
\begin{document}

\maketitle
\thispagestyle{empty}
\pagestyle{empty}

\begin{abstract}
This paper studies the role of data homogeneity on multi-agent optimization. Concentrating on the decentralized stochastic gradient ({\sf DSGD}) algorithm, we characterize the transient time, defined as the minimum number of iterations required such that {\sf DSGD} can achieve the comparable performance as its centralized counterpart. When the Hessians for the objective functions are identical at different agents, we show that the transient time of {\sf DSGD} is ${\cal O}( n^{4/3} / \rho^{8/3} )$ for smooth (possibly non-convex) objective functions, where $n$ is the number of agents and $\rho$ is the spectral gap of connectivity graph. This is improved over the bound of ${\cal O}( n^2 / \rho^4 )$ without the Hessian homogeneity assumption. Our analysis leverages a property that the objective function is twice continuously differentiable. Numerical experiments are presented to illustrate the essence of data homogeneity to fast convergence of {\sf DSGD}.
\end{abstract}

\section{Introduction}
Consider a system of $n$ agents which are connected on a network. We are concerned with the following multi-agent stochastic optimization problem:
\beq \textstyle \label{eq:opt}
\min_{ \prm \in \RR^d } f(\prm) := (1/n) \sum_{i=1}^n f_i( \prm ),
\eeq 
where $f_i: \RR^d \to \RR$, $d \in \NN$, is the local stochastic objective function held by agent $i$, $i=1,\ldots, n$. The gradient of each function $f_i(\prm)$ is assumed to be Lipschitz continuous with respect to (w.r.t.) the decision variable $\prm \in \RR^d$ while the function itself is possibly non-convex. 
In addition, the $n$ agents communicate with neighboring agents through an undirected graph  as they tackle \eqref{eq:opt} in a collaborative fashion.

The multi-agent optimization problem \eqref{eq:opt} has wide applications in control and machine learning (ML) \cite{nedic2009distributed, bottou2018optimization, chang2020distributed}. We concentrate on the distributed ML application. The function $f_i(\prm)$ models the mismatch of the decision variable $\prm \in \RR^d$ with respect to the \emph{local data} held by the $i$th agent. Concretely, we consider objective function of the form
\beq \label{eq:stocfct}
f_i ( \prm ) = \EE_{ z_i \sim {\sf B}_i } [ \ell ( \prm; z_i ) ] ,
\eeq 
where the data distribution ${\sf B}_i$ is defined on the space ${\sf Z}$ as it describes the data at the $i$th agent, and $\ell : \RR^d \times {\sf Z} \to \RR$ is the loss function. 
We remark that in distributed ML, a common setting is to assume \emph{homogeneous data} such that ${\sf B}_i = {\sf B}_j$ for all $i,j$. The latter models a scenario with $f_i(\prm) \equiv f_j(\prm)$ where agents observe independent and identically distributed (i.i.d.) samples \cite{Zhang2015CommunicationEfficientDO, Reddi2016AIDEFA}. Note this is in contrast to the non-i.i.d.~setting where \emph{heterogeneous data} is observed \cite{wang2021field}.

This paper focuses on tackling (\ref{eq:opt}) via stochastic distributed first-order algorithms. In the basic setting, each agent carries out the optimization of a local estimate of a stationary solution to \eqref{eq:opt} using noisy gradients of its local objective function $f_i(\prm)$. The latter is assumed to be unbiased estimates of $\grd f_i ( \prm )$ with bounded second order moment. Particularly, the distributed stochastic gradient ({\sf DSGD}) method is proposed in \cite{sundhar2010distributed} (also see \cite{nedic2009distributed}) which combines network average consensus with stochastic gradient updates. Despite its simple structure, {\sf DSGD} is shown to be efficient  theoretically and empirically in tackling large-scale machine learning problems. In particular, \cite{lian2017decentralized} showed that {\sf DSGD} achieves a `linear speedup' where the asymptotic convergence rate approaches that of a centralized SGD ({\sf CSGD}) algorithm with a minibatch size of $n$, i.e., with reduced variance. 

However, the convergence rate of {\sf DSGD} can be severely affected by the network size $n$, the mixing rate (a.k.a.~spectral gap) of the connection graph $\rho \in (0,1]$ (see A\ref{ass: graph}). In fact, \cite{lian2017decentralized} demonstrated that the linear speedup of {\sf DSGD} can be guaranteed only when the iteration number exceeds a \emph{transient time} of ${\cal O}( n^2 / \rho^4 )$ iterations, which can be undesirable for system with many agents; also see the recent work \cite{pu2021sharp} which focused on strongly convex optimization problems. Note that we have $\rho = \Theta(1/n^2)$ {for ring graph, $\rho=\Theta(1/n)$ for 2d-torus graph, see \cite{aldous1995reversible}}. 
The study of the transient time is important where it is closely linked to the communication efficiency in distributed optimization.  

Recent works have sought to speed up the convergence of distributed stochastic optimization by designing new and sophisticated algorithms. Examples include \cite{uribe2021dual, scaman2017optimal} which apply multiple communication steps per iteration, D-GET, GT-HSGD \cite{sun2020improving, xin2021hybrid} which combine gradient tracking with variance reduction, EDAS \cite{huang2021improving} which utilizes a similar idea to EXTRA \cite{shi2015extra} but characterizes an improved transient time for strongly convex problems. We also mention that recent works have combined compressed communication in distributed optimization, e.g., \cite{frasca2009average,koloskova2019decentralized}, whose techniques are complementary to the above works.

This paper is motivated by the successes of {\sf DSGD} in practice shown in various works \cite{lian2017decentralized,assran2019stochastic,vlaski2021distributed}. We depart from the prior studies
and inquire the following question:
\begin{center} 
\emph{Can {\sf DSGD} achieve fast convergence with a shorter transient time than ${\cal O}(n^2/\rho^4)$?}
\end{center}
We provide an affirmative answer to the above question through studying the role of \emph{data homogeneity} in distributed stochastic optimization.  
Our key finding is that when the data held by agents are (close to) homogeneous such that the Hessians are close, i.e., $\grd^2 f_i( \prm ) \approx \grd^2 f_j( \prm) $ for any $i,j$ and $\prm \in \RR^d$, then the transient time of {\sf DSGD} can be significantly shortened.
To summarize, our contributions are:
\begin{itemize}[leftmargin=*]
    \item Under the Hessian homogeneity assumption and a second order smoothness condition for the objective function $f_i$, we show that the transient time of {\sf DSGD} can be improved to ${\cal O}(n^{4/3}/\rho^{8/3})$ from ${\cal O}(n^2/\rho^4)$ in \cite{lian2017decentralized}. Our result highlights the role of data homogeneity in the (fast) convergence of {\sf DSGD} which may explain the latter's efficacy in practical large-scale machine learning.
    \item We introduce new proof techniques for finding tight bounds in the convergence of distributed stochastic optimization. Importantly, we demonstrate how to extract accelerated convergence rates when the Hessians of objective functions are Lipschitz. This leads to a set of high order inequalities with fast-decaying errors.
    \item To verify our theorems, we conduct numerical experiment on a toy binary classification problem with linear models. We empirically demonstrate that data homogeneity is a key factor affecting the (fast) convergence of {\sf DSGD} through a controlled comparison between {\sf DSGD} with homogeneous and heterogeneous data.
\end{itemize}
To our best knowledge, this is the first analysis to demonstrate accelerated convergence rate without modifying the simple structure of {\sf DSGD}. Our result provides evidence for the good performance of {\sf DSGD} in practice.

\vspace{+0.1cm}

\noindent {\bf Notations:} Throughout this paper, we use the following notations:
$\norm{\cdot}$ is the vector $\ell_2$ norm or the matrix spectral norm depending on the argument, $\norm{\cdot}_{F}$ is the matrix Frobenius norm, and ${\bm 1}$ is the all-one column vector in $\RR^n$. 
We set 
$f^\star \eqdef \min_{\prm\in\RR^d} f(\prm) > -\infty$ as the optimal objective value of \eqref{eq:opt}.  The subscript-less operator $\EE [\cdot]$ denotes the total expectation taken over all randomnesses in the operand.

\section{Problem Statement and Assumptions}

Consider a multiagent network system whose communication is represented by an undirected graph ${\cal G}=({\cal N}, {\cal E})$, where ${\cal N}=[n] = \{1,\ldots,n \}$ is the set of agents and ${\cal E}\subseteq {\cal N}\times {\cal N}$ denotes the set of edges between the communicating agents. Note that $(i,i) \in {\cal E}$ as self loops are included in ${\cal G}$. Every agent $i \in {\cal N}$ can directly receive and send information only from its neighbors $\{ j : (i,j) \in {\cal E} \}$. 

Furthermore, the graph ${\cal G}$ is endowed with a symmetric, weighted adjacency matrix (a.k.a.~mixing matrix) ${\bm W} \in \RR^{n \times n}$ such that $W_{ij} = W_{ji} > 0$ if and only if $(i,j) \in {\cal E}$; otherwise $W_{ij} = W_{ji} = 0$. Moreover, we assume that
\begin{Assumption}\label{ass: graph}
The matrix ${\bm W} \in \RR^{n \times n}$ is doubly stochastic, i.e., ${\bm W}\mathbf{1}={\bm W}^\top\mathbf{1}=\mathbf{1}$. There exists a constant ${\rho} \in(0,1]$ and a projection matrix ${\bm U} \in \RR^{n \times (n-1)}$ which can be represented as $\boldsymbol{I}-\frac{1}{n} \mathbf{1} \mathbf{1}^{\top}={\bm U} {\bm U}^{\top}$ such that $\left\|{\bm U}^{\top} \bm{W} \bm{U} \right\|_{2} \leq 1-{\rho}$.
\end{Assumption}
\noindent The above is a common assumption for the connected graph ${\cal G}$. For instance, the mixing matrix ${\bm W}$ satisfying A\ref{ass: graph} can be constructed using the Metropolis-Hasting weights \cite{aldous1995reversible}.

To tackle \eqref{eq:opt}, a classical algorithm is the decentralized stochastic gradient descent ({\sf DSGD}), whose recursion at iteration $t \geq 0$ can be described as 
\begin{align}\label{algo}
    \prm_i^{t+1} = \sum_{j=1}^n W_{ij} \prm_j^t - \gamma_{t+1} {\grd} \ell ( \prm_i^t ; z_i^{t+1} ) , ~ i=1,\ldots,n,
\end{align}
where $z_i^{t+1} \sim {\sf B}_i$ is a sample drawn independently from the distribution ${\sf B}_i$, and $\gamma_{t+1} > 0$ is the step size.

The {\sf DSGD} algorithm \eqref{algo} is a \emph{gossip} type algorithm where information is spread along the edges on the communication graph ${\cal G}$. 
At iteration $t$, the local iterate $\prm_i^t$ held by agent $i$ is communicated to the neighboring nodes $j$. 
Each agent $i$ performs a consensus update by computing an average of its local iterate as well as the neighbors' iterates via the mixing matrix ${\bm W}$. Subsequently, the agent draws a stochastic gradient estimate $\grd \ell( \prm_i^t; z_i^{t+1} )$ to perform a gradient step.

\subsection{Convergence of {\sf DSGD}: Basic Results}
We first discuss a basic convergence result for {\sf DSGD} which is derived under a general setting that does not require the data across agents in the model \eqref{eq:opt} to be homogeneous. Notice that our result below is akin to the analysis in \cite{lian2017decentralized}. 

Our result depends on a few standard assumptions, which have been used in prior works such as \cite{lian2017decentralized}, as follows: 
\begin{Assumption} \label{ass: smooth}
For any $i = 1,...,n$, there exists $L \geq 0$ such that
\beq
\| \grd f_i( \prm' ) - \grd f_i(\prm )\| \leq L \| \prm' - \prm \|,~\forall~\prm', \prm \in \RR^d.
\eeq
\end{Assumption}
\noindent The above assumes that the gradient of each local objective function is $L$-Lipschitz continuous. Note that under the above assumption, $f_i(\prm)$ can be non-convex. 

\begin{Assumption}\label{ass:SecOrdMom}
For any $i=1,\ldots, n$ and fixed $\prm \in \RR^d$, it holds $\EE_{z_i \sim {\sf B}_i} [ \grd \ell( \prm ; z_i ) ] = \grd f_i (\prm)$ and there exists $\sigma \geq 0$ with
\begin{align}
\EE_{z_i \sim {\sf B}_i} [ \| \grd \ell( \prm ; z_i ) - \grd f_i( \prm ) \|^2 ] \leq \sigmatwo^2 . \label{A4-1}
\end{align}
\end{Assumption}
\begin{Assumption}\label{ass:hete}
For any $i=1,\ldots,n$, there exists $\varsigma \geq 0$ such that:
\begin{align}
& \| \grd f( \prm ) - \grd f_i( \prm) \| \leq \varsigma,~\forall~\prm \in \RR^d. \label{eq:hete_grad} 
\end{align}
\end{Assumption}
\noindent In the above, A\ref{ass:SecOrdMom} states that the stochastic gradient estimates are unbiased and have bounded variance. Meanwhile, A\ref{ass:hete} assumes that the gradients  of the component function, $\grd f_i(\prm)$, have bounded distance from the gradient of the average function, $\grd f(\prm)$. 
Notice that the scalar $\varsigma$ measures the amount of data homogeneity (via gradient). If $\varsigma = 0$, then $f_i(\prm), f_j(\prm)$ differ only by a constant; see \cite{lian2017decentralized}. 


It will be convenient to denote the averaged iterate at the $t$th iteration as:
\beq \textstyle 
\Bprm^t := (1/n) \sum_{i=1}^n \prm_i^t .
\eeq
Observe the following basic convergence results for {\sf DSGD}:
\begin{theorem}\label{thm1}
Under A\ref{ass: graph}--\ref{ass:hete}, suppose that there exists $b \in \RR_+$ such that
\[
\textstyle \sup_{t\geq 1}\gamma_{t} \leq \min\left\{
    \frac{\rho}{4L}, \sqrt{\frac{\rho /(4b)}{1-\rho /2}}  \right\},~\frac{\gamma_{t}}{\gamma_{t+1}} \leq 1+ b\gamma_{t+1}.
\]
Let ${\sf D}\eqdef f(\Bprm^0)- f^\star$. For any $T\geq 1$, it holds 
\begin{align}
& \textstyle \EE\left[\sum_{t=0}^{T-1}\gamma_{t+1}\norm{\grd f(\Bprm^t)}^2 \right] \\
& \leq 4 {\sf D} + \frac{2L\sigma^2}{n} \sum_{t=0}^{T-1}\gamma_{t+1}^2 +  \frac{32L^2(\varsigma^2+\sigma^2)}{\rho^2} \sum_{t=0}^{T-1}\gamma_{t+1}^3. \nonumber
\end{align}
\end{theorem}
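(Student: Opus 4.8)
The plan is to run the standard two-sequence analysis for {\sf DSGD}: I would track the descent of $f$ along the \emph{averaged} iterate $\Bprm^t$ and separately control the \emph{consensus error} $\Omega_t \eqdef \EE[\sum_{i=1}^n \norm{\prm_i^t-\Bprm^t}^2]$ that couples the two sequences. First I would use the double stochasticity of $\bm W$ (A\ref{ass: graph}) to average the recursion \eqref{algo}, obtaining $\Bprm^{t+1}=\Bprm^t-\gamma_{t+1}\bar{\bm g}^t$ with $\bar{\bm g}^t\eqdef(1/n)\sum_i\grd\ell(\prm_i^t;z_i^{t+1})$, whose conditional mean given the natural filtration $\mathcal F_t$ is $(1/n)\sum_i\grd f_i(\prm_i^t)$. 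Applying the descent lemma for the $L$-smooth $f$ (A\ref{ass: smooth}) and taking $\EE[\cdot\mid\mathcal F_t]$ gives a one-step inequality. I would split the inner product $\langle\grd f(\Bprm^t),(1/n)\sum_i\grd f_i(\prm_i^t)\rangle$ with the identity $2\langle a,b\rangle=\norm{a}^2+\norm{b}^2-\norm{a-b}^2$, and bound the difference term by $\tfrac{L^2}{n}\sum_i\norm{\prm_i^t-\Bprm^t}^2$ via A\ref{ass: smooth} and Jensen; this is exactly where the consensus error enters. For the quadratic term I would use the independence of the per-agent noise across $i$ together with A\ref{ass:SecOrdMom} to obtain the variance-reduced bound $\EE[\norm{\bar{\bm g}^t}^2\mid\mathcal F_t]\leq \sigma^2/n+\norm{(1/n)\sum_i\grd f_i(\prm_i^t)}^2$.

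Combining these, the step-size bound $\gamma_{t+1}\leq\rho/(4L)\leq 1/L$ renders the coefficient of $\norm{(1/n)\sum_i\grd f_i(\prm_i^t)}^2$ non-positive, so that term is discarded. Taking total expectation, telescoping over $t=0,\dots,T-1$, and using $f(\Bprm^T)\geq f^\star$ yields
\beq
\sum_{t=0}^{T-1}\gamma_{t+1}\,\EE\big[\norm{\grd f(\Bprm^t)}^2\big]
\leq 2{\sf D}+\tfrac{L^2}{n}\sum_{t=0}^{T-1}\gamma_{t+1}\Omega_t
+\tfrac{L\sigma^2}{n}\sum_{t=0}^{T-1}\gamma_{t+1}^2 ,
\eeq
so it remains only to bound $\sum_t\gamma_{t+1}\Omega_t$, which is what will generate the $\gamma^3$ term; the final constants $4{\sf D}$ and $\tfrac{2L\sigma^2}{n}$ follow by tracking the factors absorbed in the consensus step.

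The main obstacle is bounding the consensus error. I would stack the iterates as $\bm\Theta^t=[\prm_1^t,\dots,\prm_n^t]^\top$, so \eqref{algo} reads $\bm\Theta^{t+1}=\bm W\bm\Theta^t-\gamma_{t+1}\bm G^t$, and project with $\bm P\eqdef\bm I-\tfrac1n\bm1\bm1^\top=\bm U\bm U^\top$, noting $\bm P\bm W=\bm W\bm P$ and the contraction $\norm{\bm P\bm W\bm X}_F\leq(1-\rho)\norm{\bm P\bm X}_F$ from A\ref{ass: graph}. Squaring the projected recursion and taking $\EE[\cdot\mid\mathcal F_t]$ (the zero-mean noise kills the cross term and contributes $\leq n\sigma^2\gamma_{t+1}^2$ by A\ref{ass:SecOrdMom}), then applying Young's inequality with parameter $\Theta(\rho)$ and bounding $\norm{\bm P\,\grd\bm F(\bm\Theta^t)}_F^2\leq 2L^2\norm{\bm P\bm\Theta^t}_F^2+2n\varsigma^2$ via A\ref{ass: smooth} and A\ref{ass:hete}, I obtain a recursion of the form $\Omega_{t+1}\leq(1-\rho/2)\Omega_t+\Theta(L^2\gamma_{t+1}^2/\rho)\Omega_t+\Theta(n(\varsigma^2+\sigma^2)/\rho)\,\gamma_{t+1}^2$. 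Here $\gamma_{t+1}\leq\rho/(4L)$ absorbs the middle term into the contraction factor, leaving $\Omega_{t+1}\leq(1-\rho/4)\Omega_t+\Theta(n(\varsigma^2+\sigma^2)/\rho)\,\gamma_{t+1}^2$.

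Finally I would sum this geometric-type recursion, and I expect this bookkeeping to be the most delicate part, since it is where both the graph constant and the step-size regularity are consumed. The subtlety is the time-varying step size: I would invoke the ratio condition $\gamma_t/\gamma_{t+1}\leq 1+b\gamma_{t+1}$ together with the second step-size bound $\gamma_{t+1}\leq\sqrt{(\rho/4b)/(1-\rho/2)}$, which keeps $(1+b\gamma_{t+1})^2(1-\rho/4)$ below $1$, so that unrolling from $\Omega_0=0$ gives $\Omega_t\lesssim \tfrac{n(\varsigma^2+\sigma^2)}{\rho^2}\gamma_{t+1}^2$ — one extra factor $1/\rho$ coming from the geometric sum on top of the $1/\rho$ already present in the forcing term. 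Multiplying by $\gamma_{t+1}$ and summing then yields $\tfrac{L^2}{n}\sum_t\gamma_{t+1}\Omega_t\lesssim \tfrac{L^2(\varsigma^2+\sigma^2)}{\rho^2}\sum_t\gamma_{t+1}^3$ (the $\sigma^2$ part only needs $1/\rho$, but is bounded by $1/\rho^2$ since $\rho\leq1$); substituting into the displayed inequality and keeping track of constants produces the claimed $\tfrac{32L^2(\varsigma^2+\sigma^2)}{\rho^2}\sum_t\gamma_{t+1}^3$.
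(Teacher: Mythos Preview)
Your proposal is correct and follows essentially the same approach as the paper: a descent lemma on $\Bprm^t$ (the paper's Lemma~\ref{lem:aa}) combined with a consensus-error bound $\Omega_t = {\cal O}\big(n(\varsigma^2+\sigma^2)\gamma_t^2/\rho^2\big)$ obtained by unrolling a contractive recursion (the paper's Lemma~\ref{lem:ceb}), then substituting and telescoping. The minor differences --- you retain the $\|(1/n)\sum_i\grd f_i(\prm_i^t)\|^2$ term from the polarization identity and cancel it against the quadratic term, and you split off the noise before applying Young's inequality in the consensus step, whereas the paper keeps both together --- are cosmetic and yield the same bound up to constants.
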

\noindent  
Note that the condition on $\{ \gamma_t \}_{t\geq 1}$ can be satisfied by a diminishing step size sequence, or a constant step size. 
The results in the theorem can be simplified as:
\begin{Corollary}\label{cor:1}
For any $T \geq 1$, set $\gamma_{t+1} = 1 / \sqrt{T}$ and let ${\sf T}$ be an r.v.~chosen uniformly from $\{0,\ldots, T-1\}$. Under A\ref{ass: graph}--\ref{ass:hete}, 
\begin{align}
\hspace{-.1cm}\EE\left[ \norm{\grd f(\Bprm^{\sf T})}^2 \right] & = {\cal O}\left( \frac{{\sf D}+L\sigma^2/n}{\sqrt{T}} +  \!\frac{L^2(\varsigma^2+\sigma^2)}{\rho^2 T} \right). \label{eq:corNor}
\end{align}
\end{Corollary}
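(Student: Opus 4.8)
The plan is to specialize Theorem~\ref{thm1} to the constant step size $\gamma_{t+1} = 1/\sqrt{T}$ and then translate its time-summed guarantee into the per-iterate bound \eqref{eq:corNor} via the uniform draw of ${\sf T}$. Before substituting, I would verify that this step size is admissible for Theorem~\ref{thm1}. The ratio condition $\gamma_t/\gamma_{t+1} = 1 \leq 1 + b\gamma_{t+1}$ holds for every $b \geq 0$; sending $b \downarrow 0$ pushes the second entry of the $\min\{\cdot\}$ to $+\infty$, so the only binding requirement is $1/\sqrt{T} \leq \rho/(4L)$, i.e.\ $T \geq 16L^2/\rho^2$. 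Since \eqref{eq:corNor} concerns the large-$T$ behavior, this is met once $T$ exceeds a constant depending on $L,\rho$, consistent with the ${\cal O}(\cdot)$ statement.

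Next I would compute the three power sums on the right-hand side of Theorem~\ref{thm1}, which under a constant step size collapse to $\sum_{t=0}^{T-1}\gamma_{t+1} = \sqrt{T}$, $\sum_{t=0}^{T-1}\gamma_{t+1}^2 = 1$, and $\sum_{t=0}^{T-1}\gamma_{t+1}^3 = 1/\sqrt{T}$. The key observation is that, because $\gamma_{t+1}$ does not depend on $t$, the uniform average implied by ${\sf T}$ coincides with the $\gamma$-weighted average, i.e.
\[
\EE\big[\norm{\grd f(\Bprm^{\sf T})}^2\big] = \frac{1}{T}\sum_{t=0}^{T-1}\EE\big[\norm{\grd f(\Bprm^t)}^2\big] = \frac{1}{\sum_{t=0}^{T-1}\gamma_{t+1}}\,\EE\Big[\sum_{t=0}^{T-1}\gamma_{t+1}\norm{\grd f(\Bprm^t)}^2\Big].
\]

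It then remains to divide the bound of Theorem~\ref{thm1} by $\sum_{t=0}^{T-1}\gamma_{t+1} = \sqrt{T}$ and insert the power sums above, which gives
\[
\EE\big[\norm{\grd f(\Bprm^{\sf T})}^2\big] \leq \frac{4{\sf D} + 2L\sigma^2/n}{\sqrt{T}} + \frac{32L^2(\varsigma^2+\sigma^2)}{\rho^2 T},
\]
after which absorbing the numerical constants into ${\cal O}(\cdot)$ yields \eqref{eq:corNor}. I do not expect any genuine obstacle: the corollary is a direct specialization of Theorem~\ref{thm1}, and the only point needing care is the admissibility check above, which is automatic for $T$ beyond a constant threshold.
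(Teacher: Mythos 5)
Your proposal is correct and is exactly the intended derivation: the paper states Corollary~\ref{cor:1} as a direct simplification of Theorem~\ref{thm1}, obtained by substituting the constant step size, evaluating $\sum\gamma_{t+1}=\sqrt{T}$, $\sum\gamma_{t+1}^2=1$, $\sum\gamma_{t+1}^3=1/\sqrt{T}$, and dividing by $\sqrt{T}$ after identifying the uniform draw of ${\sf T}$ with the $\gamma$-weighted average. Your admissibility check (the ratio condition holds for arbitrarily small $b>0$, leaving only $T\geq 16L^2/\rho^2$, which is absorbed by the ${\cal O}(\cdot)$) is a point the paper glosses over, and you handle it appropriately.
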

\noindent The first term $\propto {\sf D}+ n^{-1} L\sigma^2$ is identical to the convergence rate for {\sf CSGD} using a minibatch size of $n$, and it decays with respect to (w.r.t.) the iteration number as ${\cal O}(1/\sqrt{T})$; {see \cite{ghadimi2013stochastic}}. On the other hand, the second term $\propto L^2(\varsigma^2+\sigma^2) / \rho^2$ accounts for the effect of the communication network, and it decays w.r.t.~the iteration number as ${\cal O}(1/{T})$. 

The difference in timescales for the two terms in \eqref{eq:corNor} leads to an intriguing observation. {\sf DSGD} exhibits a behavior that corresponds to a \emph{transient time} characterization in terms of the convergence rate. If the iteration number $T$ satisfies
\beq\label{b1}
T \geq {\cal O} \left(\frac{L^4(\varsigma^4 + \sigma^4)}{\rho^4({\sf D}+L\sigma^2/n)^2}\right),
\eeq 
then $\EE\left[ \| {\grd f(\Bprm^{\sf T})} \|^2 \right] = {\cal O} ( ({\sf D}+ n^{-1} L\sigma^2 ) / {\sqrt{T}} )$, where ${\sf T} \sim {\cal U} \{ 0, \ldots, T-1 \}$. In other words, for large enough iteration number, {\sf DSGD} enjoys a similar convergence rate as its centralized counterpart with a minibatch size of $n$. 

However, a pitfall in the transient time analyzed in Corollary \ref{cor:1} is its poor dependence on the network size. In fact, the transient time in \eqref{b1} can be as large as ${\cal O} ( n^2 / \rho^4 )$ when $n \gg 1$, $\rho \ll 1$. Furthermore, we observe that this growth of the transient time remains unaffected even if the data across agents are completely homogeneous  with $\varsigma = 0$.
The above motivates the current paper to consider a tighter bound for {\sf DSGD}. In particular, with a finer grained analysis, our results will show that data homogeneity plays an important role in reducing the transient time.


\section{Main Results}
This section introduces the main result of this paper on deriving an accelerated convergence rate for {\sf DSGD} which can leverage data homogeneity across agents. 

We preface the main technical results by describing a simple case study to illustrate a key insight.
Consider the following special case of \eqref{eq:opt} with:
\beq \label{eq:quadratic}
f_i( \prm ) = (1/2) \prm^\top {\bm A} \prm + \prm^\top {\bm b},
\eeq 
where ${\bm A} \in \RR^{d \times d}$ is a positive definite matrix and ${\bm b} \in \RR^d$ is a fixed vector. 
Notice that the same ${\bm A}, {\bm b}$ are shared among the agents, indicating that the data held by agents are homogeneous\footnote{It is also possible to consider a relaxed model with heterogeneous ${\bm b}_i$ as our analysis relies on a weaker form of data homogeneity.}. 

Consider the stochastic gradient map where $z_i \equiv \tilde{\bm b}_i \sim {\sf B}_i \equiv {\sf B}$ satisfies
\beq \label{eq:stoc_grad_quad}
\grd \ell ( \prm ; z_i ) = {\bm A} \prm + \widetilde{\bm b}_i,
\eeq 
and $\widetilde{\bm b}_i$ is an independent r.v.~with $\EE[ \widetilde{\bm b}_i ] = {\bm b}$ and bounded variance $\EE[ \| \widetilde{\bm b}_i - {\bm b} \|^2 ] \leq \sigma^2$. Note that this clearly implies $\EE[ \| \grd \ell ( \prm ; z_i ) - \grd f_i (\prm) \|^2 ] \leq \sigma^2$ for any $\prm \in \RR^d$. 

With \eqref{eq:stoc_grad_quad}, the {\sf DSGD} algorithm reads:
\begin{align} 
& \textstyle \prm_i^{t+1} = \sum_{j=1}^n W_{ij} \prm_i^t - \gamma_{t+1} \big( {\bm A} \prm_i^t  + \widetilde{\bm b}_i \big) \label{eq:dsgd_quad} 
\end{align} 
Taking the average over $i=1,\ldots,n$ implies 
\beq \label{eq:linearkey}
\begin{split} 
\Bprm^{t+1} 
& \textstyle  = \Bprm^t - {\gamma_{t+1}} \big( {\bm A} \Bprm^t  + \sum_{i=1}^n \widetilde{\bm b}_i  /n \big)
\end{split}
\eeq 
We observe that the last term is an \emph{unbiased estimate} of the global gradient in \eqref{eq:opt} with $ \grd f( \Bprm^t ) = \EE[ {n}^{-1} \sum_{i=1}^n \big( {\bm A} \prm_i^t  + \widetilde{\bm b}_i \big) ]$ with the variance
\beq 
\textstyle \EE[ \| {\bm A} \Bprm^t + {n}^{-1} \sum_{i=1}^n \widetilde{\bm b}_i -  \grd f( \Bprm^t ) \|^2 ] \leq n^{-1} \sigma^2.
\eeq
Subsequently, the {\sf DSGD} recursion \eqref{eq:dsgd_quad} of the averaged iterate $\Bprm^t$ behaves identically as a centralized SGD algorithm that draws $n$ independent samples of stochastic gradient per iteration using \eqref{eq:stoc_grad_quad}. In other words, for this special case, the transient time of {\sf DSGD} shall be \emph{zero} as the latter matches the performance of {\sf CSGD} \emph{exactly}.

The above case study indicates that {\sf DSGD} may be able to leverage data homogeneity across the agents for accelerating its convergence. In particular, we anticipate the \emph{transient time} of {\sf DSGD} to be much faster if data distributions of agents are close to each other; cf.~$\varsigma \approx 0$.



\subsection{Convergence of {\sf DSGD}: Accelerated Rate}
To derive an improved bound for {\sf DSGD}, we consider the following set of additional assumptions.   

\begin{Assumption} \label{ass:Hsmooth}
For any $i=1,\ldots, n$, there exists $L_H \geq 0$ such that 
\beq
\| \grd^2 f_i( \prm' ) - \grd^2 f_i( \prm ) \| \leq L_{H} \| \prm' - \prm \|,~\forall~\prm, \prm' \in \RR^d.
\eeq
\end{Assumption} 
\noindent Notice that A\ref{ass:Hsmooth} requires  the Hessian of each $f_i$ to be Lipschitz continuous, i.e., $f_i$ is twice continuously differentiable. For quadratic functions, we observe that $L_H = 0$.  

\begin{Assumption}\label{ass:hete2}
There exists $\varsigma_H \geq 0$ such that for any $i=1,\ldots, n$,
\begin{align}
& \| \grd^2 f( \prm ) - \grd^2 f_i( \prm) \| \leq \varsigma_H,~\forall~\prm \in \RR^d. \label{eq:hete_hess}
\end{align}
\end{Assumption}
\noindent The above condition requires the Hessians of the component function $f_i(\prm)$ to be bounded from each other. While both A\ref{ass:hete}, A\ref{ass:hete2} impose conditions on the data homogeneity, we remark that having $\varsigma_H = 0$ in A\ref{ass:hete2} is strictly weaker than having $\varsigma = 0$ in A\ref{ass:hete} as the latter implies the former but not vice versa. Having $\varsigma_H = 0$ \emph{only} requires the quadratic (or higher order) terms of $f_i, f$ to be equal.
We remark that this has been shown to be a critical condition for accelerating distributed optimization \cite{arjevani2015communication, tian2021acceleration}.
Furthermore, under A\ref{ass: smooth}, it is known that $\varsigma_H \leq 2L$. 


Lastly, we strengthen A\ref{ass:SecOrdMom} to a $4$th order moment bound on the oscillation of stochastic gradients. 
\begin{Assumption}\label{ass:SecOrdMom2}
For any $i=1,\ldots, n$ and fixed $\prm \in \RR^d$, it holds $\EE_{z \sim {\sf B}_i} [ \grd \ell ( \prm ; z ) ] = \grd f_i (\prm)$ and there exists $\sigma \geq 0$ with
\begin{align}
\EE_{z \sim {\sf B}_i} [ \| \grd \ell ( \prm ; z ) - \grd f_i( \prm ) \|^4 ] \leq \hsigma^4. \label{A4-2}
\end{align}
\end{Assumption}
\noindent To simplify notations, we have taken the same constant $\sigma$ for the variance bounds in A\ref{ass:SecOrdMom} and A\ref{ass:SecOrdMom2}.


Under the refined conditions, we obtain the following improved convergence rate for the {\sf DSGD} algorithm:
\begin{theorem}\label{thm2}
Under A\ref{ass: graph}--A\ref{ass:SecOrdMom2}, suppose that there exists $b \in \RR_+$ such that
\[
\sup_{t\geq 1}\gamma_{t} \!\leq\!  \min\left\{\frac{\rho}{9L}, 
\sqrt[2]{\frac{\rho /(4b)}{1-\rho /2}} ,  \sqrt[4]{\frac{\rho /(4b)}{1-\rho /2}} \right\},
\]
and ${\gamma_t^4}/\gamma_{t+1}^{4}\! \leq\! 1+b\gamma_{t+1}^4$. Let ${\sf D} = f( \Bprm^0 ) - f^\star$. For any $T \geq 1$, it holds 
\begin{align*}
& \EE \left[ \sum_{t=0}^{T-1}\gamma_{t+1}\norm{\grd f(\Bprm^t)}^2 \right] \leq 4 {\sf D} + \frac{2L\sigma^2}{n} \sum_{t=0}^{T-1} \gamma_{t+1}^2 \\
& + \frac{432L_H^2}{\rho^4}(\sigma^4+4\varsigma^2) \sum_{t=0}^{T-1}\gamma_{t+1}^5 + \frac{32 \varsigma_H^2 (\sigma^2 + \varsigma^2) }{\rho^2} \sum_{t=0}^{T-1} \gamma_{t+1}^3.
 \end{align*}
\end{theorem}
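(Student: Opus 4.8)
The plan is to run a descent argument on the averaged iterate $\Bprm^t$, but — crucially — to control the gradient-consensus bias through a second-order Taylor expansion rather than the first-order (Lipschitz-gradient) bound behind Theorem~\ref{thm1}. Writing $\bar{\bm g}^t := \frac1n\sum_{i=1}^n \grd\ell(\prm_i^t; z_i^{t+1})$, the averaged recursion reads $\Bprm^{t+1} = \Bprm^t - \gamma_{t+1}\bar{\bm g}^t$, so applying $L$-smoothness of $f$ (A\ref{ass: smooth}) and taking conditional expectation $\EE_t$ yields a one-step inequality
\begin{equation*}
\EE_t[f(\Bprm^{t+1})] \le f(\Bprm^t) - \gamma_{t+1}\Big\langle \grd f(\Bprm^t), \textstyle\frac1n\sum_i \grd f_i(\prm_i^t)\Big\rangle + \tfrac{L}{2}\gamma_{t+1}^2\,\EE_t[\norm{\bar{\bm g}^t}^2],
\end{equation*}
where, by A\ref{ass:SecOrdMom} and the independence of the $n$ samples, the last term splits into a $\norm{\grd f(\Bprm^t)}^2$-type piece that the step-size bound $\gamma_{t+1}\le\rho/(9L)$ lets us absorb on the left, and the variance piece $\tfrac{L}{n}\sigma^2\gamma_{t+1}^2$ that produces the second term of the theorem.

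The new ingredient enters through the bias $\bm e^t := \frac1n\sum_i[\grd f_i(\prm_i^t) - \grd f_i(\Bprm^t)]$, which equals $\frac1n\sum_i \grd f_i(\prm_i^t) - \grd f(\Bprm^t)$. First I would Taylor-expand each difference as $\grd^2 f_i(\Bprm^t)(\prm_i^t - \Bprm^t)$ plus a remainder bounded by $\tfrac{L_H}{2}\norm{\prm_i^t - \Bprm^t}^2$ via A\ref{ass:Hsmooth}. The decisive step is to write $\grd^2 f_i = \grd^2 f + (\grd^2 f_i - \grd^2 f)$: the common Hessian $\grd^2 f(\Bprm^t)$ multiplies $\frac1n\sum_i(\prm_i^t - \Bprm^t) = 0$ and hence vanishes, leaving only a term controlled by $\varsigma_H$ (A\ref{ass:hete2}) times the consensus error. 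This gives $\norm{\bm e^t} \le \varsigma_H\sqrt{\Omega^t} + \tfrac{L_H}{2}\Omega^t$ with $\Omega^t := \frac1n\sum_i\norm{\prm_i^t - \Bprm^t}^2$, so that after a Young's inequality on $\langle\grd f(\Bprm^t),\bm e^t\rangle$ (absorbing a further $\tfrac{\gamma_{t+1}}{2}\norm{\grd f(\Bprm^t)}^2$ on the left) the residual is $\lesssim \gamma_{t+1}\big(\varsigma_H^2\,\Omega^t + L_H^2(\Omega^t)^2\big)$. This is the source of the improvement: the obstruction is now measured by the \emph{Hessian} heterogeneity $\varsigma_H$ and smoothness $L_H$ — both zero in the quadratic case study, so the extra terms vanish and one recovers exact {\sf CSGD} behaviour — rather than by $L$.

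It then remains to control two moments of the consensus error. Stacking the iterates and projecting with $\bm U$ from A\ref{ass: graph}, the contraction $\norm{\bm U^\top\bm W\bm U}\le 1-\rho$ gives a recursion driven by the stochastic gradients; bounding its driving term with $\varsigma$ (A\ref{ass:hete}) and $\sigma$ (A\ref{ass:SecOrdMom}) yields $\EE[\Omega^t]\lesssim \gamma^2(\sigma^2+\varsigma^2)/\rho^2$, which paired with $\varsigma_H^2\,\Omega^t$ produces the $\gamma_{t+1}^3$ contribution. For the $L_H^2(\Omega^t)^2$ term I would set up a parallel recursion for the fourth moment $\EE[(\Omega^t)^2]$; expanding the fourth power and invoking the fourth-order moment bound A\ref{ass:SecOrdMom2} (the source of the $\sigma^4$) together with A\ref{ass:hete} gives a bound of order $\gamma^4/\rho^4$, yielding the $\gamma_{t+1}^5$ term. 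Finally, summing over $t=0,\dots,T-1$, telescoping $f(\Bprm^t)-f^\star$ into $4{\sf D}$, and substituting the two moment bounds assembles the four terms.

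The main obstacle is the fourth-moment recursion. Unlike the second-moment case, expanding $\norm{\bm U^\top\bm X^{t+1}}_F^4$ for the stacked projected iterate $\bm X^t$ creates cross terms of mixed order — products of the contracting consensus error with the gradient-driven increment — and closing it requires both that the step size be small enough for the $(1-\rho)^4$-type contraction to dominate (hence the extra $\sqrt[4]{\frac{\rho/(4b)}{1-\rho/2}}$ factor in the hypothesis) and that the sequence vary slowly at fourth order, which is precisely what $\gamma_t^4/\gamma_{t+1}^4\le 1+b\gamma_{t+1}^4$ enforces so the resulting geometric-type sum telescopes. Tracking the exact constants (the $432$ and the mixed-order $\sigma^4+4\varsigma^2$ combination, in which the noise enters at fourth order while the deterministic heterogeneity couples in at second order) through this bookkeeping, while keeping the dependence tight at $\rho^{-4}$, is the delicate part of the argument.
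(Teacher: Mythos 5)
Your proposal is correct and follows essentially the same route as the paper: the decisive step---Taylor-expanding the gradient-consensus bias and cancelling the common Hessian against $\sum_{i}(\prm_i^t-\Bprm^t)=\mathbf{0}$ so that only $\varsigma_H$- and $L_H$-controlled terms survive---is exactly the paper's improved descent lemma (Lemma~\ref{lem:des}), and your second- and fourth-moment consensus-error recursions are its Lemmas~\ref{lem:ceb} and~\ref{lem:ceb-h}. One minor note: the fourth-moment bound actually yields $\sigma^4+4\varsigma^4$ (the $\varsigma^2$ in the theorem statement appears to be a typo), so the heterogeneity enters at fourth order there and no mixed-order coupling needs to be explained.
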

\noindent 
Observe that the step size conditions are similar to \Cref{thm1} and can therefore be satisfied with constant or diminishing step sizes. Furthermore, we have the following corollary:
\begin{Corollary}\label{cor:2}
For any $T \geq 1$, set $\gamma_{t+1} = 1 / \sqrt{T}$ and let ${\sf T}$ be chosen uniformly at random from $\{0,\ldots, T-1\}$. Under A\ref{ass: graph}--\ref{ass:SecOrdMom2} with $\varsigma_H = 0$, it holds
\begin{align}
\hspace{-.2cm}\EE \left[ \norm{\grd f(\Bprm^{\sf T})}^2 \right] \!=\! {\cal O} \left( \frac{{\sf D}+L\sigma^2/n}{\sqrt{T}} + \frac{ L_H^2(\sigma^4 + \varsigma^4)}{\rho^4 T^{2} / n} \right)\!.\!\label{eq:corHOS}
\end{align}
\end{Corollary}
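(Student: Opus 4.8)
The plan is to specialize the horizon-$T$ bound in \Cref{thm2} to the constant step size $\gamma_{t+1}=1/\sqrt{T}$ and then convert the step-size-weighted sum on its left-hand side into the per-iteration quantity $\EE[\|\grd f(\Bprm^{\sf T})\|^2]$. First I would verify admissibility. For a constant step size the ratio requirement $\gamma_t^4/\gamma_{t+1}^4 = 1 \le 1+b\gamma_{t+1}^4$ holds for every $b \ge 0$, so I may send $b \downarrow 0$; in this limit the two $b$-dependent entries of the $\min$ in \Cref{thm2} diverge, and the binding constraint collapses to $\gamma_{t+1} \le \rho/(9L)$. With $\gamma_{t+1}=1/\sqrt{T}$ this holds as soon as $T \ge 81 L^2/\rho^2$, a mild lower bound that is dominated by the transient regime of interest and that I would carry as a hypothesis absorbed into the ${\cal O}(\cdot)$.

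Next I would set $\varsigma_H = 0$, which removes the term $\tfrac{32\varsigma_H^2(\sigma^2+\varsigma^2)}{\rho^2}\sum_{t}\gamma_{t+1}^3$ entirely, and evaluate the remaining power sums. Under a constant step size $\sum_{t=0}^{T-1}\gamma_{t+1}^k = T\,(1/\sqrt{T})^{k} = T^{1-k/2}$, so that $\sum_t \gamma_{t+1}^2 = 1$ and $\sum_t \gamma_{t+1}^5 = T^{-3/2}$. The surviving right-hand side of \Cref{thm2} then reads $4{\sf D} + \tfrac{2L\sigma^2}{n} + \tfrac{432 L_H^2(\sigma^4+4\varsigma^2)}{\rho^4}\,T^{-3/2}$.

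For the left-hand side I would use that ${\sf T}$ is uniform on $\{0,\dots,T-1\}$: conditioning on the trajectory and taking the tower expectation gives $\EE[\|\grd f(\Bprm^{\sf T})\|^2] = \tfrac1T\,\EE[\sum_{t=0}^{T-1}\|\grd f(\Bprm^t)\|^2]$, whence, because the step size is constant, $\EE[\sum_t \gamma_{t+1}\|\grd f(\Bprm^t)\|^2] = \gamma\,T\,\EE[\|\grd f(\Bprm^{\sf T})\|^2] = \sqrt{T}\,\EE[\|\grd f(\Bprm^{\sf T})\|^2]$. Dividing the resulting inequality by $\sqrt{T}$ then yields $\EE[\|\grd f(\Bprm^{\sf T})\|^2] \le \tfrac{4{\sf D}+2L\sigma^2/n}{\sqrt{T}} + \tfrac{432 L_H^2(\sigma^4+4\varsigma^2)}{\rho^4}\,T^{-2}$; collecting the absolute constants (and the problem- and network-size factors) into ${\cal O}(\cdot)$ recovers the ${\cal O}(1/T^2)$ second term of \eqref{eq:corHOS}, while the first term reproduces the variance-reduced, minibatch-$n$ centralized rate.

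I do not anticipate a genuine obstacle: the argument is a mechanical substitution into \Cref{thm2}, where the actual work resides. The only points needing care are the admissibility check in the $b \downarrow 0$ limit and the bookkeeping of the exponent of $T$ — the fifth-power sum contributes $T^{-3/2}$, which after normalization by the $\sqrt{T}$ weight becomes a $T^{-2}$ decay, a full factor of $T^{3/2}$ faster than the centralized $1/\sqrt{T}$ term. As a concluding remark I would read off the transient time directly from \eqref{eq:corHOS}: requiring the Hessian-homogeneity term to fall below the linear-speedup term $\tfrac{L\sigma^2/n}{\sqrt{T}}$ gives, up to constants, $T^{3/2} \ge n^2 L_H^2(\sigma^4+\varsigma^4)/(\rho^4 L\sigma^2)$, i.e. $T = {\cal O}(n^{4/3}/\rho^{8/3})$, matching the headline claim.
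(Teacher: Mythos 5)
Your proposal is correct and follows exactly the route the paper intends: the corollary is a mechanical specialization of \Cref{thm2} to $\gamma_{t+1}=1/\sqrt{T}$, with the power sums $\sum_t \gamma_{t+1}^k = T^{1-k/2}$, the tower-property identity $\EE[\|\grd f(\Bprm^{\sf T})\|^2] = T^{-1}\sum_t \EE[\|\grd f(\Bprm^t)\|^2]$, and the $b\downarrow 0$ admissibility argument all as the paper would do them.

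One bookkeeping point deserves attention. Your explicit second term, $432 L_H^2(\sigma^4+4\varsigma^2)\rho^{-4}T^{-2}$, carries no factor of $n$, whereas \eqref{eq:corHOS} has $n L_H^2(\sigma^4+\varsigma^4)/(\rho^4 T^2)$. The discrepancy traces to the printed statement of \Cref{thm2}, whose $\gamma_{t+1}^5$ coefficient omits a factor of $n$ that the paper's own derivation produces (Lemma~\ref{lem:ceb-h} contributes $n^2/\rho^4$ and Lemma~\ref{lem:des} divides by $2n$, leaving $n/\rho^4$; the final display in the proof outline correctly shows $\rho^4/n$ in the denominator). Since you worked from the theorem as printed, your bound is formally a stronger statement and hence still implies \eqref{eq:corHOS}, but your concluding transient-time computation silently reinstates the factor $n$ (your $n^2$ in $T^{3/2}\ge n^2 L_H^2(\cdot)/(\rho^4 L\sigma^2)$ requires the $n$-bearing second term); without it one would instead get $T \gtrsim n^{2/3}/\rho^{8/3}$. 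You should either carry the corrected coefficient $432 n L_H^2(\sigma^4+4\varsigma^4)/\rho^4$ from the proof of \Cref{thm2}, or not absorb ``network-size factors'' into ${\cal O}(\cdot)$, so that the displayed bound and the $n^{4/3}/\rho^{8/3}$ conclusion are consistent with each other.
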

\noindent Notice that we have concentrated on the scenario when $\varsigma_H = 0$ to highlight on the effect when the data across agents are homogeneous in light of A\ref{ass:hete2}. 

The bound in \eqref{eq:corHOS} can be interpreted as follows. The first term $\propto {\sf D}+ n^{-1} L\sigma^2$ is the same term in the convergence of {\sf CSGD}; the second term $\propto n L_H^2(\sigma^4 + \varsigma^4) / \rho^4$ is the communication network-dependent term. Similar to \eqref{eq:corNor}, we observe a difference of the timescales for the two terms w.r.t.~the iteration number $T$. {The first and second term decays at a rate of ${\cal O}(1/\sqrt{T})$, ${\cal O}(1/T^2)$, respectively}. 

Performing a similar calculation to \eqref{b1} gives an \emph{improved transient time} for {\sf DSGD}:
if the iteration number satisfies 
\beq \label{b2}
T \geq {\cal O} \left( \frac{L_H^{\frac{4}{3}} (\sigma^\frac{8}{3} + \varsigma^\frac{8}{3}) n^{\frac{2}{3}} } {\rho^\frac{8}{3} ({\sf D}+ L\sigma^2/n )^\frac{2}{3}} \right), 
\eeq
then {\sf DSGD} enjoys a similar convergence rate as its centralized counterpart with a minibatch size of $n$. Now, if $n \gg 1$, $\rho \ll 1$, the transient time can be simplified to ${\cal O} ( n^{4/3} / \rho^{8/3} )$ which has a better scaling w.r.t.~$n, \rho$ than \eqref{b1} even if the condition $\varsigma = 0$ is enforced in the latter for the data homogeneity assumption A\ref{ass:hete}. 

Our convergence analysis demonstrates that data homogeneity plays an important role in accelerating the transient time of {\sf DSGD}. We remark that if $\varsigma_H$ is far from zero, then the acceleration observed in \eqref{b2} is no longer valid. In our analysis that led to \Cref{thm2}, a key observation is to exploit the high order smoothness property [cf.~A\ref{ass:Hsmooth}] that allowed us to approximate the local gradients via a linear map. Subsequently, we obtain a similar form to \eqref{eq:linearkey} for the {\sf DSGD} recursion and thus an improved convergence rate.

To our best knowledge, this is the first analysis to explicitly account for data homogeneity using the high order smoothness condition. We show that the latter leads to an improved convergence rate for the plain {\sf DSGD} algorithm.


\section{Proof Outline}\label{sec:outline}
This section outlines the proofs of \Cref{thm1}, \ref{thm2}. To simplify notations, we define the $n \times d$ matrices:
\[
    \Prm^{t}\eqdef \left(\prm_1^{t},\prm_2^{t},\cdots, \prm_n^{t} \right)^\top,~
    \BPrm^{t}\eqdef \left(\Bprm^{t},\Bprm^{t},\cdots, \Bprm^{t}\right)^\top 
\] 
and $\CSE{t} \eqdef \Prm^{t}-\BPrm^{t}$ denotes the consensus error, for any $t \geq 0$. Furthermore, we denote $\EE_t[\cdot]$ as the expectation operator conditioned on the random variables up to $t$th iteration. 

Using the above notations, \eqref{algo} implies the following update recursion for the average iterate of {\sf DSGD}:
\beq \textstyle \label{eq:dsgd_avg}
\Bprm^{t+1} = \Bprm^t - \gamma_{t+1} \sum_{i=1}^n \grd \ell( \prm_i^t ; z_i^{t+1} ) / n,
\eeq 
note that $\EE_t[ \sum_{i=1}^n \grd \ell( \prm_i^t ; z_i^{t+1} ) / n ] = \sum_{i=1}^n \grd f_i( \prm_i^t ) / n$.
Observe that the only difference between \eqref{eq:dsgd_avg} and {\sf CSGD} is in the second term. Particularly, {\sf CSGD} would use 
\beq \textstyle 
\Bprm^{t+1} = \Bprm^t - \gamma_{t+1} \sum_{i=1}^n \grd \ell( \Bprm^t ; z_i^{t+1} ) / n,
\eeq 
As we shall demonstrate next, the proofs for \Cref{thm1}, \ref{thm2} differ in how we account for the above deviation between {\sf CSGD} and {\sf DSGD}. 
\vspace{.2cm}

\noindent \emph{Proof of \Cref{thm1}.} Although the analysis for \Cref{thm1} is standard and can be found, e.g., in \cite{lian2017decentralized}, we shall discuss its proof briefly to highlight its difference with \Cref{thm2}. We begin by observing the following descent lemma:
\begin{lemma}[Basic Descent Lemma]\label{lem:aa}
    Under A\ref{ass: graph}-\ref{ass:hete}, if $\sup_{t \geq 1} \gamma_{t} \leq\frac{1}{4L}$, then for any $t \geq 0$, it holds
    \begin{align} \label{eq:aa}
        \EE_t[ f( \Bprm^{t+1} ) ] &\leq  f( \Bprm^t ) - \frac{\gamma_{t+1}}{4} \| \grd f( \Bprm^t ) \|^2 + \frac{ \gamma_{t+1}^2 L \sigma^2}{2n} \\
        &+ \gamma_{t+1} \frac{L^2}{n} \norm{ \CSE{t} }^2_F. \nonumber
    \end{align}
\end{lemma}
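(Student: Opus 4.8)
The plan is to run a standard ``perturbed descent'' argument on the averaged iterate $\Bprm^t$, treating {\sf DSGD} as {\sf CSGD} driven by a \emph{biased} gradient whose bias is precisely the consensus error. First I would invoke double stochasticity (A\ref{ass: graph}): since $\frac{1}{n}\mathbf{1}^\top {\bm W}=\frac{1}{n}\mathbf{1}^\top$, the mixing step cancels upon averaging, so $\Bprm^t$ obeys the clean recursion \eqref{eq:dsgd_avg}, which I write as $\Bprm^{t+1}=\Bprm^t-\gamma_{t+1}\,\bar{g}^t$ with $\bar{g}^t\eqdef\frac{1}{n}\sum_{i=1}^n\grd\ell(\prm_i^t;z_i^{t+1})$. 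Because each $f_i$ is $L$-smooth (A\ref{ass: smooth}), so is $f=\frac{1}{n}\sum_i f_i$, and I apply the descent inequality $f(\Bprm^{t+1})\le f(\Bprm^t)+\langle\grd f(\Bprm^t),\Bprm^{t+1}-\Bprm^t\rangle+\frac{L}{2}\|\Bprm^{t+1}-\Bprm^t\|^2$.

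Next I would take the conditional expectation $\EE_t[\cdot]$. Writing $\mu^t\eqdef\frac{1}{n}\sum_{i=1}^n\grd f_i(\prm_i^t)=\EE_t[\bar{g}^t]$ (unbiasedness, A\ref{ass:SecOrdMom}), the linear term becomes $-\gamma_{t+1}\langle\grd f(\Bprm^t),\mu^t\rangle$, and I split the quadratic term into its mean square and variance via $\EE_t[\|\bar{g}^t\|^2]=\|\mu^t\|^2+\EE_t[\|\bar{g}^t-\mu^t\|^2]$. Here independence of the samples $\{z_i^{t+1}\}_i$ across agents is crucial: the cross terms vanish and A\ref{ass:SecOrdMom} gives $\EE_t[\|\bar{g}^t-\mu^t\|^2]=\frac{1}{n^2}\sum_i\EE_t[\|\grd\ell(\prm_i^t;z_i^{t+1})-\grd f_i(\prm_i^t)\|^2]\le\frac{\sigma^2}{n}$, which is exactly where the variance-reduction factor $1/n$, and hence the term $\frac{\gamma_{t+1}^2 L\sigma^2}{2n}$, enters.

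The crux is that $\mu^t$ averages the local gradients evaluated at the \emph{local} iterates $\prm_i^t$ rather than at $\Bprm^t$; this gap is the consensus error. Using $L$-smoothness of each $f_i$ and Cauchy--Schwarz, $\|\mu^t-\grd f(\Bprm^t)\|\le\frac{1}{n}\sum_i L\|\prm_i^t-\Bprm^t\|\le\frac{L}{\sqrt{n}}\|\CSE{t}\|_F$. I would substitute $\mu^t=\grd f(\Bprm^t)+(\mu^t-\grd f(\Bprm^t))$ into the linear term and apply Young's inequality to trade the cross term $-\gamma_{t+1}\langle\grd f(\Bprm^t),\mu^t-\grd f(\Bprm^t)\rangle$ against a fraction of $\|\grd f(\Bprm^t)\|^2$ plus the consensus-error bound; similarly I bound $\|\mu^t\|^2\le 2\|\grd f(\Bprm^t)\|^2+2\|\mu^t-\grd f(\Bprm^t)\|^2$ inside the quadratic term. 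The step-size restriction $\gamma_{t+1}\le\frac{1}{4L}$ is precisely what makes $L\gamma_{t+1}^2\le\frac{\gamma_{t+1}}{4}$, so the residual $\|\mu^t\|^2$ contribution is absorbed into the $\|\grd f(\Bprm^t)\|^2$ budget, the net coefficient of $\|\grd f(\Bprm^t)\|^2$ is held at $-\frac{\gamma_{t+1}}{4}$, and the consensus error collects into $\gamma_{t+1}\frac{L^2}{n}\|\CSE{t}\|_F^2$.

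The main obstacle I anticipate is the bookkeeping of the Young's-inequality constants so that the stray $\|\grd f(\Bprm^t)\|^2$, $\|\mu^t\|^2$, and consensus-error contributions collapse to exactly the claimed coefficients under $\gamma_{t+1}\le1/(4L)$; this is routine but is the only place where the factor $1/4$ and the step-size threshold are pinned down. I also note that A\ref{ass:hete} (the $\varsigma$ bound) is not actually needed for this descent step—it enters only later, when the accumulated consensus error $\sum_t\|\CSE{t}\|_F^2$ is bounded and carried into \Cref{thm1}.
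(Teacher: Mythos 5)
Your proposal is correct and follows essentially the same route as the paper's proof: $L$-smoothness of $f$ applied to the averaged recursion \eqref{eq:dsgd_avg}, conditional expectation with the independence-across-agents variance bound $\sigma^2/n$, and absorption of the gradient-mismatch term $\|\frac{1}{n}\sum_i(\grd f_i(\prm_i^t)-\grd f_i(\Bprm^t))\|^2 \leq \frac{L^2}{n}\|\CSE{t}\|_F^2$ under $\gamma_{t+1}\leq 1/(4L)$ (your Young's-inequality treatment of the cross term is just the polarization bound the paper uses). Your closing remark that A\ref{ass:hete} is not actually needed for this lemma is also consistent with the paper's proof, which only invokes A\ref{ass: graph}--A\ref{ass:SecOrdMom}.
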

\noindent We highlight that \eqref{eq:aa} was derived through using the smoothness property A\ref{ass: smooth} to handle the difference $\sum_{i=1}^n \grd f_i( \prm_i^t) - \grd f_i( \Bprm^t)$, which is proportional to $L^2 \norm{\CSE{t+1}}_F^2$.

The above lemma prompts us to bound the consensus error $\norm{\CSE{t+1}}_F^2$. A key observation is:
\begin{lemma}[Consensus Error Bound]\label{lem:ceb}
Under A\ref{ass: graph}, A\ref{ass:SecOrdMom},  A\ref{ass:hete}, if $\sup_{t \geq 1}\gamma_{t} \leq \frac{\rho}{4L}$,
and there exists a constant $b\in\RR^+$ such that $\gamma_t^2/ \gamma_{t+1}^2\leq 1+b \gamma_{t+1}^2$ for all $t$, and $\prm_i^0=\prm_j^0$ for all $i,j$, then it holds
\begin{align}
    \EE\left[ \norm{\CSE{t+1}}_F^2 \right]  & \leq   \frac{8n}{\rho^2}\gamma_{t+1}^2\left(\varsigma^2 + \sigma^2\right). \label{eq:ceb}
\end{align}
\end{lemma}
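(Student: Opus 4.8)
The plan is to derive a recursion for the expected consensus error $\EE[\|\CSE{t+1}\|_F^2]$ and then resolve that recursion into the claimed closed-form bound. First I would write the matrix form of the {\sf DSGD} update \eqref{algo} as $\Prm^{t+1} = \bm{W}\Prm^t - \gamma_{t+1} \grd L(\Prm^t)$, where $\grd L(\Prm^t)$ stacks the stochastic gradients $\grd\ell(\prm_i^t;z_i^{t+1})$ row-wise. Subtracting the averaged recursion \eqref{eq:dsgd_avg} (in its stacked form $\BPrm^{t+1}=\BPrm^t - \gamma_{t+1}\tfrac1n\bm{1}\bm{1}^\top \grd L(\Prm^t)$) and using the fact that $\bm{W}\BPrm^t = \BPrm^t$ (from double stochasticity, A\ref{ass: graph}), I get $\CSE{t+1} = \bm{W}\CSE{t} - \gamma_{t+1}(\bm{I}-\tfrac1n\bm{1}\bm{1}^\top)\grd L(\Prm^t)$. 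The key spectral input is that $\bm{W}$ acts as a contraction with factor $1-\rho$ on the subspace orthogonal to $\bm{1}$, which is exactly where $\CSE{t}$ and the projected gradient term live; this is the content of A\ref{ass: graph} via $\|\bm{U}^\top\bm{W}\bm{U}\|\le 1-\rho$.

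Next I would take conditional expectation $\EE_t[\cdot]$ and expand the squared Frobenius norm of the recursion. The cross term splits into a part involving the gradient noise, which vanishes in expectation by unbiasedness (A\ref{ass:SecOrdMom}), and a deterministic part involving the mean gradients $\grd f_i(\prm_i^t)$. I would apply Young's inequality (with a parameter tuned to $\rho$, typically of the form $\|\bm{a}+\bm{b}\|^2 \le (1+\eta)\|\bm{a}\|^2 + (1+1/\eta)\|\bm{b}\|^2$ with $\eta \asymp \rho$) to separate the contracted consensus term from the gradient term, yielding a recursion schematically of the form
\begin{align*}
\EE[\|\CSE{t+1}\|_F^2] &\le \big(1-\tfrac{\rho}{2}\big)\EE[\|\CSE{t}\|_F^2] + c\,\frac{\gamma_{t+1}^2}{\rho}\,\EE[\|(\bm{I}-\tfrac1n\bm{1}\bm{1}^\top)\grd L(\Prm^t)\|_F^2].
\end{align*}
The gradient-variance factor is then controlled: the noise contributes a term bounded by $n\sigma^2$ via A\ref{ass:SecOrdMom}, and the projected mean-gradient contributes a term bounded using the heterogeneity bound $\varsigma$ from A\ref{ass:hete} together with smoothness A\ref{ass: smooth} to relate $\grd f_i(\prm_i^t)$ back to $\grd f(\Bprm^t)$ and the consensus error itself. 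This is where one must be careful, since the smoothness step reintroduces $\|\CSE{t}\|_F^2$; the step-size cap $\gamma_t \le \rho/(4L)$ is precisely what lets this self-referential term be absorbed back into the contraction factor without spoiling the $1-\rho/2$ rate.

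I would then unroll the one-step recursion from the initial condition $\CSE{0}=\bm{0}$ (guaranteed by $\prm_i^0=\prm_j^0$). The step-size ratio condition $\gamma_t^2/\gamma_{t+1}^2 \le 1+b\gamma_{t+1}^2$ is used to pull the slowly-varying $\gamma_{t+1}^2$ outside the geometric sum: each past term $\gamma_{s+1}^2$ is comparable to the current $\gamma_{t+1}^2$ up to a factor absorbed by the contraction, so that $\sum_{s=0}^{t}(1-\rho/2)^{t-s}\gamma_{s+1}^2 \lesssim \gamma_{t+1}^2 \sum_{k\ge 0}(1-\rho/2+\text{correction})^{k} \asymp \gamma_{t+1}^2/\rho$. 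Combining with the prefactor $\gamma_{t+1}^2/\rho$ from the per-step noise term produces the final $\gamma_{t+1}^2/\rho^2$ scaling and the constant $8n$, matching \eqref{eq:ceb}.

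The main obstacle I anticipate is the bookkeeping in the geometric summation: one must verify that the step-size ratio assumption, the contraction factor $1-\rho/2$, and the correction incurred by absorbing the smoothness-induced $\|\CSE{t}\|_F^2$ term all combine so that the effective decay rate stays bounded away from $1$ by $\Theta(\rho)$, giving the clean $1/\rho^2$ dependence rather than a worse power of $\rho$. Getting the explicit constant $8$ (as opposed to merely an $\mathcal{O}(1)$ factor) will require tracking the Young's-inequality parameter and the step-size bounds tightly throughout.
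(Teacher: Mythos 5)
Your proposal is correct and follows essentially the same route as the paper: project onto the complement of $\bm{1}$, use $\|\bm{U}^\top\bm{W}\bm{U}\|\le 1-\rho$ with a Young's inequality parameter of order $\rho$ to get a $(1-\rho/2)$-contraction, decompose the projected gradient term into noise ($n\sigma^2$ via A3), heterogeneity ($n\varsigma^2$ via A4), and a smoothness-induced $L^2\|\CSE{t}\|_F^2$ term absorbed by the step-size cap $\gamma_t\le\rho/(4L)$, then unroll from $\CSE{0}=\bm{0}$ using the step-size ratio condition to evaluate the geometric sum. The only cosmetic difference is that you let the noise part of the cross term vanish by unbiasedness before applying Young's, whereas the paper applies Young's directly to the full stochastic term; both yield the stated $8n\gamma_{t+1}^2(\varsigma^2+\sigma^2)/\rho^2$ bound.
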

\noindent 
Substituting \eqref{eq:ceb} back into \eqref{eq:aa}, 
and summing up from $t=0$ to $T-1$ lead to
the bound for \Cref{thm1}.\hfill~$\square$\vspace{.2cm}

\noindent \emph{Proof of \Cref{thm2}.} The key observation made in our proof is the following property. Define the linear map approximation error for the gradient map $\grd f_i$ as:
\beq 
{\mathscr{E}_i}( \prm'; \prm ) := \grd f_i( \prm' ) - \grd f_i( \prm ) - \grd^2 f_i ( \prm ) ( \prm' - \prm ).
\eeq 
It holds that
\beq \label{eq:nesterov}
\| \mathscr{ E}_i( \prm'; \prm ) \|  \leq \frac{L_{H}}{2} \| \prm' - \prm \|^2,~\forall~\prm', \prm \in \RR^d.
\eeq
The above is due to the high order smoothness condition A\ref{ass:Hsmooth}; see \cite[Lemma 1.2.5]{nesterov2003introductory}. 
It inspires us to consider the following relation:
\begin{align}
& \textstyle \frac{1}{n} \sum_{i=1}^n \{ \grd f_i( \prm_i^t) - \grd f_i( \Bprm^t ) \} \label{eq:linear_approx} \\
& \textstyle = \frac{1}{n} \sum_{i=1}^n \{ \grd^2 f_i( \Bprm^t ) ( \prm_i^t - \Bprm^t ) + \mathscr{E}_i( \prm_i^t ; \Bprm^t ) \}. \nonumber \\
& \textstyle = \frac{1}{n} \sum_{i=1}^n \{ ( \grd^2 f_i( \Bprm^t ) - \grd^2 f( \Bprm^t ) ) ( \prm_i^t - \Bprm^t ) + \mathscr{E}_i( \prm_i^t ; \Bprm^t ) \}, \nonumber 
\end{align}
where the last equality is due to $(1/n) \sum_{i=1}^n \grd^2 f( \Bprm^t ) \prm_i^t = \grd^2 f( \Bprm^t ) \Bprm^t$ since the map is linear. 

The last equality in \eqref{eq:linear_approx} enables a fine grained analysis on the difference in mean fields between the updates used in {\sf DSGD} and {\sf CSGD}. In fact, we obtain the following bound:
\beq \nonumber 
\norm{ \sum_{i=1}^n \frac{ \grd f_i( \prm_i^t) - \grd f_i( \Bprm^t ) }{n} }^2 \leq \left( \frac{ L_{H}^2}{2n} \| \CSE{t} \|_F^2 
+ \frac{\varsigma_H^2}{n}  \right) \| \CSE{t} \|_F^2.
\eeq 
Note that with A\ref{ass: smooth} alone, the bound would be just $L^2 \norm{\CSE{t}}_F^2 / n$. In contrast, we obtained a finer bound with $4$th order consensus error through the high order smoothness condition.
To conclude, the above insights gives an improved descent lemma upon Lemma~\ref{lem:aa}: 
\begin{lemma}[Improved Descent Lemma]\label{lem:des}
    Under A\ref{ass: graph}, A\ref{ass: smooth}, A\ref{ass:SecOrdMom}, A\ref{ass:Hsmooth}, A\ref{ass:hete2}, if $ \sup_{t \geq 1} \gamma_{t} \leq \frac{1}{4L}$, then for any $t \geq 0$, it holds
    \beq \label{eq:des}
    \begin{aligned}
        \EE_t[ f( \Bprm^{t+1} ) ] & \leq f( \Bprm^t ) - \frac{\gamma_{t+1}}{4} \| \grd f( \Bprm^t ) \|^2 + \frac{ \gamma_{t+1}^2 L \sigma^2}{2n} \\
        & \quad + \frac{\gamma_{t+1}}{2n} \Big(  L_{H}^2 \| \CSE{t} \|_F^{2} + 2 \varsigma_H^2  \Big) \| \CSE{t} \|_F^{2}.
    \end{aligned}
    \eeq
\end{lemma}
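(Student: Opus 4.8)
The plan is to follow the same skeleton as the Basic Descent Lemma (\Cref{lem:aa}), but to replace the crude $L$-Lipschitz estimate of the mean-field mismatch $n^{-1}\sum_{i=1}^n\{\grd f_i(\prm_i^t)-\grd f_i(\Bprm^t)\}$ by the sharper fourth-order bound supplied by the linear-map approximation in \eqref{eq:linear_approx}--\eqref{eq:nesterov}. Everything else is standard descent-lemma bookkeeping; the whole gain of \Cref{lem:des} over \Cref{lem:aa} is concentrated in that single substitution.

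First I would use that $f=n^{-1}\sum_i f_i$ inherits $L$-smoothness from A\ref{ass: smooth}, so that $f(\Bprm^{t+1})\le f(\Bprm^t)+\langle\grd f(\Bprm^t),\Bprm^{t+1}-\Bprm^t\rangle+\tfrac{L}{2}\norm{\Bprm^{t+1}-\Bprm^t}^2$, and substitute the averaged recursion \eqref{eq:dsgd_avg}. Writing $\bar{g}^t:=n^{-1}\sum_i\grd f_i(\prm_i^t)$ for the conditional mean of the stochastic gradient, I take $\EE_t[\cdot]$: because the fresh samples $z_i^{t+1}$ are drawn independently across agents and each has variance at most $\sigma^2$ (A\ref{ass:SecOrdMom}), the cross terms vanish and the second moment splits as $\EE_t\norm{n^{-1}\sum_i\grd\ell(\prm_i^t;z_i^{t+1})}^2\le\norm{\bar{g}^t}^2+\sigma^2/n$. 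This is what produces the centralized variance term $\gamma_{t+1}^2 L\sigma^2/(2n)$.

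Next I would extract the descent via the polarization identity $-\langle\grd f(\Bprm^t),\bar{g}^t\rangle=\tfrac12\big(\norm{\grd f(\Bprm^t)-\bar{g}^t}^2-\norm{\grd f(\Bprm^t)}^2-\norm{\bar{g}^t}^2\big)$. The resulting $-\tfrac{\gamma_{t+1}}{2}\norm{\bar{g}^t}^2$ combines with the $+\tfrac{L\gamma_{t+1}^2}{2}\norm{\bar{g}^t}^2$ coming from the second-moment expansion into $\tfrac{\gamma_{t+1}}{2}(L\gamma_{t+1}-1)\norm{\bar{g}^t}^2$, which is nonpositive precisely because $\gamma_{t+1}\le 1/(4L)$; hence it can be discarded. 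What survives is the negative gradient term together with the mean-field deviation $\tfrac{\gamma_{t+1}}{2}\norm{\grd f(\Bprm^t)-\bar{g}^t}^2$.

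The crux --- and the only place the high-order assumptions enter --- is bounding this deviation. Since $\grd f(\Bprm^t)-\bar{g}^t=-n^{-1}\sum_i\{\grd f_i(\prm_i^t)-\grd f_i(\Bprm^t)\}$, I would invoke the fourth-order estimate already derived above, namely $\norm{n^{-1}\sum_i\{\grd f_i(\prm_i^t)-\grd f_i(\Bprm^t)\}}^2\le\big(\tfrac{L_H^2}{2n}\norm{\CSE{t}}_F^2+\tfrac{\varsigma_H^2}{n}\big)\norm{\CSE{t}}_F^2$, which follows from the exact decomposition \eqref{eq:linear_approx}, the Hessian-Lipschitz bound \eqref{eq:nesterov}, and the Hessian-homogeneity bound A\ref{ass:hete2}. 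Substituting this in place of the $O(\norm{\CSE{t}}_F^2)$ term used in \Cref{lem:aa} is the decisive move: it turns the consensus-error penalty into one that is quartic in $\norm{\CSE{t}}_F$ (plus a term weighted by $\varsigma_H^2$), which is exactly what will later yield the faster-decaying $\gamma_{t+1}^5$ contribution in \Cref{thm2}. Collecting the surviving terms gives the claimed inequality; I note that the polarization route actually produces slightly smaller constants than stated, so the displayed bound holds a fortiori. I expect the smoothness expansion and variance splitting to be routine, and the entire conceptual burden to lie in justifying and then exploiting this fourth-order deviation bound.
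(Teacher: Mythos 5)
Your proposal is correct and follows essentially the same route as the paper: establish the descent inequality from $L$-smoothness and the variance splitting $\EE_t\|\frac1n\sum_i\Tgrd f_i^t\|^2\le\|\frac1n\sum_i\grd f_i^t\|^2+\sigma^2/n$, then replace the crude $L^2\|\CSE{t}\|_F^2/n$ bound on the mean-field deviation by the quartic bound coming from \eqref{eq:linear_approx}, \eqref{eq:nesterov}, and A\ref{ass:hete2}. The only (immaterial) difference is that you retain $-\tfrac{\gamma_{t+1}}{2}\|\frac1n\sum_i\grd f_i^t\|^2$ from the polarization identity and cancel it against the second-moment term, whereas the paper discards it and bounds $\|\frac1n\sum_i\grd f_i^t\|^2\le 2\|\grd f(\Bprm^t)\|^2+2\|\cdot\|^2$; your version yields slightly smaller constants, so the stated bound follows a fortiori.
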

\noindent Observe \eqref{eq:des} differs from \eqref{eq:aa} only by the last term proportional to the consensus error $\norm{\CSE{t}}_F^2$. 
Furthermore, the high order consensus error admits the following bound:
\begin{lemma}[High Order Consensus Error Bound]\label{lem:ceb-h} 
Under A\ref{ass: graph}, A\ref{ass: smooth}, A\ref{ass:hete}, A\ref{ass:SecOrdMom2}, if $ \sup_{t \geq 1}\gamma_{t}\leq \frac{\rho}{9 L} $, and there exist a constant $b\in\RR^+$ such that $\gamma_t^4/\gamma_{t+1}^{4}\leq 1+b\gamma_t^4$, and $\prm_i^0=\prm_j^0$ for all $i,j$, then for any $t \geq 0$, it holds
    \begin{align}
        \EE \left[ \norm{\CSE{t+1}}_F^4 \right] & \leq \frac{216(\sigma^4  + 4  \varsigma^4) n^2 }{\rho^4}\gamma_{t+1}^4. \label{eq:ceb-h}
    \end{align}
\end{lemma}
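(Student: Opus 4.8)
The plan is to establish a self-contained one-step recursion for $\EE[\|\CSE{t+1}\|_F^4]$ that contracts at rate $1-\Theta(\rho)$ with a forcing term of order $\gamma_{t+1}^4 n^2(\sigma^4+\varsigma^4)/\rho^3$, and then unroll it. First I would write the consensus-error dynamics in matrix form. Stacking \eqref{algo} gives $\Prm^{t+1} = \bm{W}\Prm^t - \gamma_{t+1}\grd\bm{L}^t$, where $\grd\bm{L}^t$ collects the stochastic gradients $\grd\ell(\prm_i^t;z_i^{t+1})$ as rows. Setting $\bm{P} := \bm{I} - \tfrac{1}{n}\bm{1}\bm{1}^\top = \bm{U}\bm{U}^\top$ and using $\bm{P}\bm{W} = \bm{W}\bm{P}$ together with $\CSE{t} = \bm{P}\Prm^t$ yields
$$\CSE{t+1} = \bm{W}\CSE{t} - \gamma_{t+1}\bm{P}\,\grd\bm{L}^t.$$
Decomposing each stochastic gradient as $\grd\ell(\prm_i^t;z_i^{t+1}) = \grd f_i(\prm_i^t) + \bm{n}_i^t$ with $\EE_t[\bm{n}_i^t]=0$, I set $\bm{A}_t := \bm{W}\CSE{t} - \gamma_{t+1}\bm{P}\grd\bm{F}^t$ (the $\mathcal{F}_t$-measurable mean-field part, $\grd\bm{F}^t$ having rows $\grd f_i(\prm_i^t)$) and $\bm{B}_t := -\gamma_{t+1}\bm{P}\bm{N}^t$ (the zero-conditional-mean noise part), so that $\CSE{t+1} = \bm{A}_t + \bm{B}_t$.

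Next I would bound the mean-field part. By A\ref{ass: graph}, since each column of $\CSE{t}$ is orthogonal to $\bm{1}$, one has $\|\bm{W}\CSE{t}\|_F \leq (1-\rho)\|\CSE{t}\|_F$; and since $\bm{P}$ annihilates the row-constant matrix built from $\grd f(\Bprm^t)$, A\ref{ass: smooth} and A\ref{ass:hete} give $\|\bm{P}\grd\bm{F}^t\|_F \leq L\|\CSE{t}\|_F + \sqrt{n}\,\varsigma$. With the step-size bound $\gamma_{t+1}\leq\rho/(9L)$ these combine to $\|\bm{A}_t\|_F \leq (1-c\rho)\|\CSE{t}\|_F + \gamma_{t+1}\sqrt{n}\,\varsigma$ for an absolute constant $c\in(0,1)$. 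Raising to the fourth power via the convexity inequality $(a+b)^4 \leq \theta^{-3}a^4 + (1-\theta)^{-3}b^4$ with $\theta = 1-c\rho$ gives
$$\|\bm{A}_t\|_F^4 \leq (1-c\rho)\,\|\CSE{t}\|_F^4 + (c\rho)^{-3}\gamma_{t+1}^4\, n^2\varsigma^4.$$

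The crux is the fourth-power expansion of $\CSE{t+1}=\bm{A}_t+\bm{B}_t$. Taking $\EE_t$ of $(\|\bm{A}_t\|_F^2 + 2\langle\bm{A}_t,\bm{B}_t\rangle + \|\bm{B}_t\|_F^2)^2$, the term $4\|\bm{A}_t\|_F^2\langle\bm{A}_t,\bm{B}_t\rangle$ vanishes because $\EE_t[\bm{B}_t]=0$; the genuine fourth-order noise term is handled by Cauchy--Schwarz, $\|\bm{N}^t\|_F^4 = (\sum_i\|\bm{n}_i^t\|^2)^2 \leq n\sum_i\|\bm{n}_i^t\|^4$, which with A\ref{ass:SecOrdMom2} yields $\EE_t[\|\bm{B}_t\|_F^4]\leq \gamma_{t+1}^4 n^2\sigma^4$ --- this is the origin of the $n^2$ factor. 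The remaining quadratic-in-noise contribution $6\|\bm{A}_t\|_F^2\,\EE_t[\|\bm{B}_t\|_F^2]$ and the single odd term $4\EE_t[\|\bm{B}_t\|_F^2\langle\bm{A}_t,\bm{B}_t\rangle]$, whose conditional expectation does \emph{not} vanish, I would control by Cauchy--Schwarz and Young's inequality, bounding the latter by $4\|\bm{A}_t\|_F\,\EE_t[\|\bm{B}_t\|_F^3]$ with $\EE_t[\|\bm{B}_t\|_F^3]\leq(\EE_t[\|\bm{B}_t\|_F^4])^{3/4}$. After taking total expectation, both mixed terms are bounded using the already-established second-moment estimate $\EE[\|\CSE{t}\|_F^2]\leq \tfrac{8n}{\rho^2}\gamma_t^2(\varsigma^2+\sigma^2)$ of Lemma~\ref{lem:ceb} (applicable since A\ref{ass:SecOrdMom2} implies A\ref{ass:SecOrdMom}), so that they contribute only terms of order $\gamma_{t+1}^4 n^2(\sigma^4+\varsigma^4)/\rho^2$. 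Collecting everything produces the recursion
$$\EE[\|\CSE{t+1}\|_F^4] \leq (1-c\rho)\,\EE[\|\CSE{t}\|_F^4] + \gamma_{t+1}^4\,\frac{C\,n^2(\sigma^4+\varsigma^4)}{\rho^3}$$
for an absolute constant $C$.

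Finally I would unroll this recursion. Using $\CSE{0}=\bm{0}$ (from $\prm_i^0=\prm_j^0$) and the ratio condition $\gamma_t^4/\gamma_{t+1}^4\leq 1+b\gamma_t^4$, which under the stated step-size bound forces successive forcing terms to differ by at most a factor $1+\Theta(\rho)$, a routine induction shows $\EE[\|\CSE{t+1}\|_F^4]\leq \tfrac{2}{c\rho}\cdot\gamma_{t+1}^4\tfrac{C n^2(\sigma^4+\varsigma^4)}{\rho^3}$, which is of the claimed form $\tfrac{216(\sigma^4+4\varsigma^4)n^2}{\rho^4}\gamma_{t+1}^4$ once the constants are consolidated. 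I expect the main obstacle to be the bookkeeping around the non-vanishing odd cross term $\EE_t[\|\bm{B}_t\|_F^2\langle\bm{A}_t,\bm{B}_t\rangle]$, and ensuring that all the Young and Cauchy--Schwarz splittings leave the contraction coefficient strictly below $1$ (at most $1-c\rho$) while keeping the forcing term at the target order $\rho^{-3}$; that extra factor of $\rho$, recovered from the geometric-sum step, combined with the fourth-moment $n^2$ scaling, is precisely what produces the final $n^2/\rho^4$ dependence.
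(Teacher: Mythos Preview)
Your approach is correct but takes a genuinely different route from the paper's. The paper never splits $\CSE{t+1}$ into a mean-field part $\bm A_t$ and a noise part $\bm B_t$. Instead it keeps the full stochastic-gradient block $\Tgrd F^t$ intact, applies Young's inequality \emph{twice} at the level of Frobenius norms (once for the square, once again for the fourth power, with parameters $\alpha=\beta=\rho/(1-\rho)$) to obtain directly
\[
\|\CSE{t+1}\|_F^4 \;\le\; (1-\rho)\,\|\CSE{t}\|_F^4 \;+\; \frac{\gamma_{t+1}^4}{\rho^3}\,\|\bm U\bm U^\top\Tgrd F^t\|_F^4,
\]
and only \emph{then} takes conditional expectation and bounds $\EE_t\|\bm U\bm U^\top\Tgrd F^t\|_F^4$ in one shot via $(a+b+c)^4\le 27(a^4+b^4+c^4)$, A\ref{ass:SecOrdMom2}, A\ref{ass:hete}, and A\ref{ass: smooth}. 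That bound contains an $L^4\|\CSE{t}\|_F^4$ contribution, which is absorbed back into the contraction using $\gamma_{t+1}\le\rho/(9L)$, yielding exactly your target recursion with explicit constants and without ever touching Lemma~\ref{lem:ceb}.

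The trade-off: the paper's route is shorter and entirely self-contained---by squaring before averaging it sidesteps the whole cross-term expansion, so the non-vanishing cubic term $\EE_t[\|\bm B_t\|_F^2\langle\bm A_t,\bm B_t\rangle]$ you (correctly) worry about never appears. Your route, by contrast, explicitly exploits the martingale structure to kill $4\|\bm A_t\|_F^2\langle\bm A_t,\bm B_t\rangle$ in conditional mean and then leans on Lemma~\ref{lem:ceb} to tame the remaining mixed terms; this is more work and introduces an external dependence, but it makes transparent exactly where the zero-mean property helps, and could be sharper if one later wanted to track constants or relax the fourth-moment assumption.
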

\noindent Substituting Lemmas \ref{lem:ceb}, \ref{lem:ceb-h} into Lemma \ref{lem:des}, 
and  taking summation from $t=0$ to $T-1$ for the both sides, then 
\beqq
\begin{aligned}
    &\sum_{t=0}^{T-1}\gamma_{t+1}\norm{\grd f(\Bprm^t)}^2 \leq 4\left[f(\Bprm^0)- f^\star \right] + \frac{2L\sigma^2}{n}\sum_{t=0}^{T-1}\gamma_{t+1}^2 
    \\
    &+ \frac{432 L_H^2 (\sigma^4+4\varsigma^2)}{\rho^4 / n} \sum_{t=0}^{T-1}\gamma_{t+1}^{5} + \frac{32 \varsigma_H^2 (\sigma^2 + \varsigma^2) }{\rho^2} \sum_{t=0}^{T-1} \gamma_{t+1}^3,
\end{aligned}
\eeqq
which completes the proof of Theorem \ref{thm2}. \hfill $\square$

\section{Numerical Experiments}
This section presents preliminary experiment to verify accelerated convergence with homogeneous data using {\sf DSGD}.

\begin{figure}[t]
    \centering
    \includegraphics[scale=0.45]{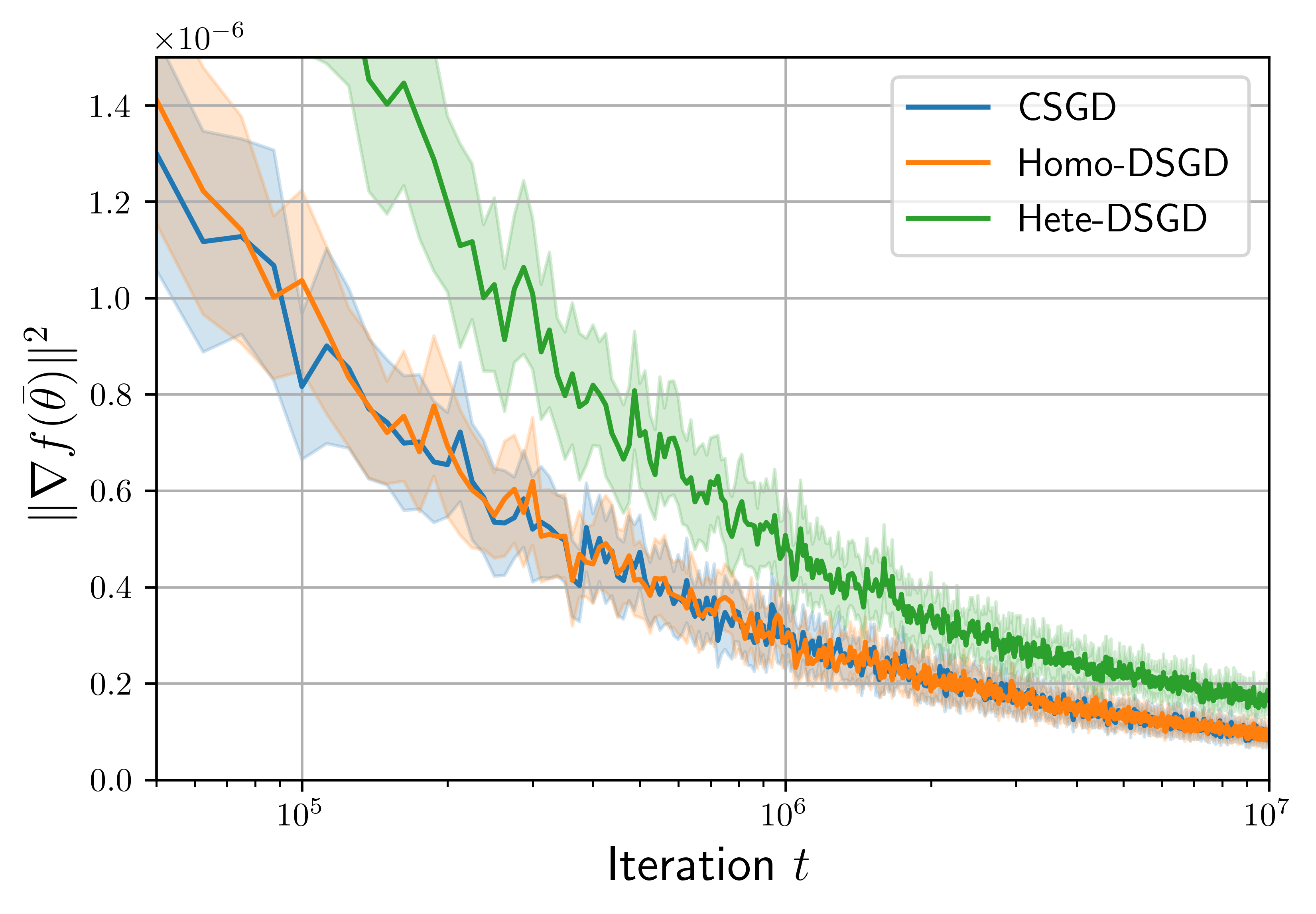}\vspace{-.2cm}
    \caption{Comparing the convergence behavior of {\sf DSGD} algorithm under homogeneous / heterogeneous data alongside with {\sf CSGD}. At each iteration, each agent simulated in the {\sf Homo-DSGD}/{\sf Hete-DSGD} algorithms take $1$ sample, while the {\sf CSGD} algorithm takes a batch of $n$ samples.}\vspace{-.3cm}
    \label{fig1}
\end{figure}

Our aim is to verify \Cref{thm1}, \ref{thm2} via comparing the performance of {\sf DSGD} under homogeneous and heterogeneous data. Consider a binary classification problem using a linear model for \eqref{eq:opt}, \eqref{eq:stocfct}, where the loss function takes the form of a non-convex sigmoid function:
\beq \label{eq:sigmoid} \textstyle 
\ell( \prm; z ) = ({1 + \exp( y \pscal{x}{\prm}) })^{-1} + \frac{\beta}{2} \norm{\prm}^2 ,
\eeq 
where $z \equiv (x, y)$ with the feature $x \in \RR^d$, the label $y \in \{ \pm 1 \}$ represents a (training) data sample and $\beta > 0$ is a regularization parameter. Note that \eqref{eq:sigmoid} satisfies A\ref{ass: smooth}--A\ref{ass:SecOrdMom2}.
We consider $n=12$ agents connected on a ring graph, with a self weight of $W_{ii} = 0.9$. 

We set $d=5$ and $\beta=10/m$, where $m$ is the total number of data samples at the agents. To simulate the \emph{heterogeneous data} setting ($\varsigma, \varsigma_H \neq 0$), we first generate the parameter vectors as $\prm_{\text{o},i}\in {\cal U}[-1 + \frac{i-1}{6}, -1+\frac{i}{6}]^{5} $ , $i=1,\ldots, 12$. Then, for each $i=1,\ldots,12$, the data distribution ${\sf B}_i$ is taken to be the empirical distribution of $n_i = 200$ samples $\{ x_j^i , y_j^i \}_{j=1}^{200}$, which are generated as 
\[ 
x_j^i \sim {\cal U}[-1,1]^{5},~~ y_j^i = \sign(\Pscal{x_j^i}{\prm_{\text{o}, i}}).
\]
Subsequently, we denote the algorithm where agent $i$ draws samples from the above ${\sf B}_i$ as the {\sf Hete-DSGD} algorithm. 
On the other hand, to simulate the \emph{homogeneous data} setting ($\varsigma, \varsigma_H = 0$), we consider a combined dataset by taking $\tilde{\sf B}_i$ to be the empirical distribution of $\{ \{ x_j^i , y_j^i \}_{j=1}^{200} \}_{i=1}^{12}$ such that ${\sf B} = \tilde{\sf B}_i$ for all $i$. The corresponding algorithm is denoted as the {\sf Homo-DSGD} algorithm. 
As benchmark, we also consider the {\sf CSGD} algorithm which draws a minibatch of $n=12$ samples from ${\sf B}$ at each iteration. 
Furthermore, we set the stepsize in the algorithms as $\gamma_{t+1}= \sqrt{a_0/(a_1 + t)}$ with 
$a_0=1/\beta$, $a_1=8 L^2 / \beta^2$. 
For the {\sf Homo-DSGD}/{\sf Hete-DSGD} algorithms, the initial solution for each agent is randomly drawn as $\prm_i^{(0)} \sim {\cal N}(1, 0.8)^{5}$, and we use the average $\Bprm^{(0)}$ as the initial solution for {\sf CSGD}.



In Fig.~\ref{fig1}, we compare the norm of gradient $\|\grd f(\Bprm^t)\|^2$ against the number of iteration $t$ for the tested algorithms, over $50$ repeated runs of the stochastic algorithms. The shaded region indicate the 95\% confidence interval. Observe that the two {\sf DSGD} algorithms approach the same steady state convergence behavior as the centralized algorithm {\sf CSGD} as $t \to \infty$, validating our basic result in \Cref{thm1}. 
Moreover, we observe that the {\sf Homo-DSGD} algorithm matches the performance of {\sf CSGD} with a much smaller \emph{transient time} than the {\sf Hete-DSGD} algorithm. The observation corroborates with \Cref{thm2}.


\section{Conclusions}
In this work, we provided a fine grained analysis for the convergence rate of {\sf DSGD} while focusing on the role of data homogeneity. Particularly, we show that the plain {\sf DSGD} algorithm may achieve fast convergence when the data distribution across agents are similar to each other. Our theoretical results are supported by numerical experiment.

\bibliographystyle{IEEEtran}
\bibliography{ecl.bib}

\begin{thebibliography}{10}
\providecommand{\url}[1]{#1}
\csname url@samestyle\endcsname
\providecommand{\newblock}{\relax}
\providecommand{\bibinfo}[2]{#2}
\providecommand{\BIBentrySTDinterwordspacing}{\spaceskip=0pt\relax}
\providecommand{\BIBentryALTinterwordstretchfactor}{4}
\providecommand{\BIBentryALTinterwordspacing}{\spaceskip=\fontdimen2\font plus
\BIBentryALTinterwordstretchfactor\fontdimen3\font minus
  \fontdimen4\font\relax}
\providecommand{\BIBforeignlanguage}[2]{{%
\expandafter\ifx\csname l@#1\endcsname\relax
\typeout{** WARNING: IEEEtran.bst: No hyphenation pattern has been}%
\typeout{** loaded for the language `#1'. Using the pattern for}%
\typeout{** the default language instead.}%
\else
\language=\csname l@#1\endcsname
\fi
#2}}
\providecommand{\BIBdecl}{\relax}
\BIBdecl

\bibitem{nedic2009distributed}
A.~Nedic and A.~Ozdaglar, ``Distributed subgradient methods for multi-agent
  optimization,'' \emph{IEEE Transactions on Automatic Control}, vol.~54,
  no.~1, pp. 48--61, 2009.

\bibitem{bottou2018optimization}
L.~Bottou, F.~E. Curtis, and J.~Nocedal, ``Optimization methods for large-scale
  machine learning,'' \emph{SIAM Review}, 2018.

\bibitem{chang2020distributed}
T.-H. Chang, M.~Hong, H.-T. Wai, X.~Zhang, and S.~Lu, ``Distributed learning in
  the nonconvex world: From batch data to streaming and beyond,'' \emph{IEEE
  Signal Processing Magazine}, vol.~37, no.~3, 2020.

\bibitem{Zhang2015CommunicationEfficientDO}
Y.~Zhang and L.~Xiao, ``Communication-efficient distributed optimization of
  self-concordant empirical loss,'' in \emph{Large-Scale and Distributed
  Optimization}.\hskip 1em plus 0.5em minus 0.4em\relax Springer, 2018, pp.
  289--341.

\bibitem{Reddi2016AIDEFA}
S.~J. Reddi, J.~Konecn{\'y}, P.~Richt{\'a}rik, B.~P{\'o}czos, and A.~Smola,
  ``Aide: Fast and communication efficient distributed optimization,''
  \emph{arXiv preprint arXiv:1608.06879}, 2016.

\bibitem{wang2021field}
J.~Wang, Z.~Charles, Z.~Xu, G.~Joshi, H.~B. McMahan, M.~Al-Shedivat, G.~Andrew,
  S.~Avestimehr, K.~Daly, D.~Data \emph{et~al.}, ``A field guide to federated
  optimization,'' \emph{arXiv preprint arXiv:2107.06917}, 2021.

\bibitem{sundhar2010distributed}
S.~Sundhar~Ram, A.~Nedi{\'c}, and V.~V. Veeravalli, ``Distributed stochastic
  subgradient projection algorithms for convex optimization,'' \emph{JOTA},
  vol. 147, no.~3, pp. 516--545, 2010.

\bibitem{lian2017decentralized}
X.~Lian, C.~Zhang, H.~Zhang, C.-J. Hsieh, W.~Zhang, and J.~Liu, ``Can
  decentralized algorithms outperform centralized algorithms? a case study for
  decentralized parallel stochastic gradient descent,'' in \emph{NeurIPS},
  2017.

\bibitem{pu2021sharp}
S.~Pu, A.~Olshevsky, and I.~C. Paschalidis, ``A sharp estimate on the transient
  time of distributed stochastic gradient descent,'' \emph{Ieee Transactions On
  Automatic Control}, 2021.

\bibitem{aldous1995reversible}
D.~Aldous and J.~Fill, \emph{Reversible Markov chains and random walks on
  graphs}.\hskip 1em plus 0.5em minus 0.4em\relax Berkeley, 1995.

\bibitem{uribe2021dual}
C.~A. Uribe, S.~Lee, A.~Gasnikov, and A.~Nedi{\'c}, ``A dual approach for
  optimal algorithms in distributed optimization over networks,''
  \emph{Optimization Methods and Software}, vol.~36, no.~1, pp. 171--210, 2021.

\bibitem{scaman2017optimal}
K.~Scaman, F.~Bach, S.~Bubeck, Y.~T. Lee, and L.~Massouli{\'e}, ``Optimal
  algorithms for smooth and strongly convex distributed optimization in
  networks,'' in \emph{ICML}, 2017, pp. 3027--3036.

\bibitem{sun2020improving}
H.~Sun, S.~Lu, and M.~Hong, ``Improving the sample and communication complexity
  for decentralized non-convex optimization: Joint gradient estimation and
  tracking,'' in \emph{ICML}, 2020, pp. 9217--9228.

\bibitem{xin2021hybrid}
R.~Xin, U.~Khan, and S.~Kar, ``A hybrid variance-reduced method for
  decentralized stochastic non-convex optimization,'' in \emph{ICML}, 2021.

\bibitem{huang2021improving}
K.~Huang and S.~Pu, ``Improving the transient times for distributed stochastic
  gradient methods,'' \emph{arXiv preprint arXiv:2105.04851}, 2021.

\bibitem{shi2015extra}
W.~Shi, Q.~Ling, G.~Wu, and W.~Yin, ``Extra: An exact first-order algorithm for
  decentralized consensus optimization,'' \emph{SIAM Journal on Optimization},
  vol.~25, no.~2, pp. 944--966, 2015.

\bibitem{frasca2009average}
P.~Frasca, R.~Carli, F.~Fagnani, and S.~Zampieri, ``Average consensus on
  networks with quantized communication,'' \emph{International Journal of
  Robust and Nonlinear Control}, vol.~19, no.~16, pp. 1787--1816, 2009.

\bibitem{koloskova2019decentralized}
A.~Koloskova, T.~Lin, S.~U. Stich, and M.~Jaggi, ``Decentralized deep learning
  with arbitrary communication compression,'' in \emph{ICLR}, 2019.

\bibitem{assran2019stochastic}
M.~Assran, N.~Loizou, N.~Ballas, and M.~Rabbat, ``Stochastic gradient push for
  distributed deep learning,'' in \emph{ICML}, 2019, pp. 344--353.

\bibitem{vlaski2021distributed}
S.~Vlaski and A.~H. Sayed, ``Distributed learning in non-convex
  environments—part i: Agreement at a linear rate,'' \emph{IEEE Transactions
  on Signal Processing}, vol.~69, pp. 1242--1256, 2021.

\bibitem{ghadimi2013stochastic}
S.~Ghadimi and G.~Lan, ``Stochastic first-and zeroth-order methods for
  nonconvex stochastic programming,'' \emph{SIAM Journal on Optimization},
  vol.~23, no.~4, pp. 2341--2368, 2013.

\bibitem{arjevani2015communication}
Y.~Arjevani and O.~Shamir, ``Communication complexity of distributed convex
  learning and optimization,'' \emph{NeurIPS}, vol.~28, 2015.

\bibitem{tian2021acceleration}
Y.~Tian, G.~Scutari, T.~Cao, and A.~Gasnikov, ``Acceleration in distributed
  optimization under similarity,'' in \emph{AISTATS}, 2022.

\bibitem{nesterov2003introductory}
Y.~Nesterov, \emph{Introductory lectures on convex optimization: A basic
  course}.\hskip 1em plus 0.5em minus 0.4em\relax Springer Science \& Business
  Media, 2003, vol.~87.

\end{thebibliography}






\appendices
\section{Proof of Lemma \ref{lem:aa}}
Throughout the appendix, we shall use the notations $\grd f_i^t := \grd f_i(\prm_i^t)$, $\Tgrd f_i^t := \grd \ell( \prm_i^t ; z_i^{t+1} )$, and
\[
\Tgrd F^t \eqdef \left(\Tgrd f_1^t, \Tgrd f_2^t, \cdots, \Tgrd f_n^t\right)^\top \in \RR^{n \times d}.
\]

Turning our attention back to the proof of Lemma~\ref{lem:aa}. Using A\ref{ass: smooth} and \eqref{eq:dsgd_avg}, we have{\small 
\begin{align*}
f( \Bprm^{t+1} ) \leq  f( \Bprm^t )  - \langle \grd f( \Bprm^t ) | {\textstyle \frac{ \gamma_{t+1}}{n}} \! \sum_{i=1}^n \! \Tgrd f_i^t\rangle  +   {\textstyle \frac{\gamma_{t+1}^2 L}{2 n^2} }\left\| \sum_{i=1}^n \Tgrd f_i^t \right\|^2.
\end{align*}}Taking the conditional expectation $\EE_t[\cdot]$ on both sides yields
\begin{align} \label{eq:expF}
 \!\EE_t[ f( \Bprm^{t+1} ) ] &\leq f( \Bprm^t ) \!-\! \gamma_{t+1} \textstyle \Pscal{ \grd f( \Bprm^t ) }{ \frac{1}{n} \sum_{i=1}^n \grd f_i^t }\\
&\textstyle \quad + \gamma_{t+1}^2 \frac{L}{2} \EE_t \left[ \left\| \frac{1}{n} \sum_{i=1}^n \Tgrd f_i^t \right\|^2 \right]. \nonumber
\end{align}
We note that the second term is lower bounded by
\begin{align*}
&\textstyle \pscal{ \grd f( \Bprm^t ) }{ \frac{1}{n} \sum_{i=1}^n \grd f_i^t } \\
&\geq \textstyle \frac{1}{2} \| \grd f( \Bprm^t ) \|^2- \frac{1}{2} \left\| \frac{1}{n} \sum_{i=1}^n \big( \grd f_i( \Bprm^t) - \grd f_i^t \big) \right\|^2 ,
\end{align*}
and the last term can be upper bounded by
\begin{align*}
& \textstyle \EE_t \left[ \left\| \frac{1}{n} \sum_{i=1}^n \Tgrd f_i^t \right\|^2 \right] 
\leq \frac{\sigma^2}{n} + \left\| \frac{1}{n} \sum_{i=1}^n \grd f_i^t \right\|^2 \\
& \textstyle \leq \frac{\sigma^2}{n} + 2 \| \grd f( \Bprm^t) \|^2 + 2 \left\| \frac{1}{n} \sum_{i=1}^n \big( \grd f_i( \Bprm^t) - \grd f_i^t \big) \right\|^2,
\end{align*}
where we have used A\ref{ass:SecOrdMom} in the first inequality. Substituting into \eqref{eq:expF} 
and using the step size condition $\gamma_{t+1} \leq \frac{1}{4L}$ gives
\beq \label{eq:expF1}
\EE_t[ f( \Bprm^{t+1} ) ] \leq f( \Bprm^t ) - \frac{\gamma_{t+1}}{4} \| \grd f( \Bprm^t ) \|^2 + \frac{ \gamma_{t+1}^2 L \sigma^2}{2n}\\
\textstyle + \gamma_{t+1} \left\| \frac{1}{n} \sum_{i=1}^n \big( \grd f_i( \Bprm^t) - \grd f_i( \prm_i^t) \big) \right\|^2 \nonumber
\eeq
Observe that by A\ref{ass: smooth}, the last term is bounded by:
\begin{align*}
& \textstyle \left\| \frac{1}{n} \sum_{i=1}^n \big( \grd f_i( \Bprm^t) - \grd f_i^t \big) \right\|^2 
\leq \frac{L^2}{n} \norm{\BPrm^t-\Prm^t}^2_F.
\end{align*}
Substituting back into \eqref{eq:expF1} leads to
\begin{align}
\EE_t[ f( \Bprm^{t+1} ) ] &\leq f( \Bprm^t ) - \frac{\gamma_{t+1}}{4} \| \grd f( \Bprm^t ) \|^2 + \frac{ \gamma_{t+1}^2 L \sigma^2}{2n} \\
&+ \gamma_{t+1} \frac{L^2}{n} \norm{\BPrm^t-\Prm^t}^2_F. \nonumber
\end{align}
This concludes the proof.
\ifonlineapp
\else
Proofs of the other lemmas in Sec.~\ref{sec:outline} can be found in the online appendix: \url{http://www.se.cuhk.edu.hk/~htwai/pdf/cdc22_homo.pdf}.
\fi 
\ifonlineapp
\newpage

\section{Proof of Lemma~\ref{lem:ceb}}
We observe the following chain:
\begin{align}
    & \Prm^{t+1}-\BPrm^{t+1} = \left({\bm I}-\frac{1}{n}{\bm 1}{\bm 1}^\top \right)\Prm^{t+1} = {\bm U}{\bm U}^\top \Prm^{t+1} \nonumber \\   
    &={\bm U}{\bm U}^\top \left({\bm W}\Prm^t -\gamma_{t+1}\Tgrd F^t\right) \nonumber \\
    &= {\bm U}{\bm U}^\top \left[{\bm W} \left({\bm U \bm U}^\top + \frac{1}{n}{\bm 1 \bm 1}^\top\right)\Prm^t -\gamma_{t+1}\Tgrd F^t\right] \nonumber \\
    &= {\bm U}\left({\bm U}^\top {\bm W}{\bm U} \right) {\bm U}^\top \Prm^t - \gamma_{t+1}{\bm U}{\bm U}^\top \Tgrd F^t \label{eq:consensuserr}
\end{align}
Using A\ref{ass: graph} shows that $\norm{\CSE{t+1} }_F^2$ is upper bounded by
\begin{align*}
    & (1+\alpha) (1-\rho)^2 \norm{ \CSE{t} }_F^2 \!+\! \left(1+\frac{1}{\alpha}\right) \gamma_{t+1}^2\norm{{\bm U \bm U}^\top \Tgrd F^t}_F^2,
\end{align*}
for any $\alpha > 0$. Setting $\alpha=\frac{\rho}{1-\rho}$ gives
\begin{align}\label{eq:cseb}
    \norm{\CSE{t+1}}_F^2 
    & \leq (1-\rho)\norm{\CSE{t}}_F^2+\frac{\gamma_{t+1}^2}{\rho}  \norm{{\bm U \bm U}^\top \Tgrd F^t}_F^2.
\end{align}
We observe that
\begin{align*}
    & \norm{{\bm U \bm U}^\top \Tgrd F^t}_F^2 = \sum_{i=1}^{n} \norm{\Tgrd f_i^t-\frac{1}{n}\sum_{j=1}^{n}\Tgrd f_j^t}^2
    \\
    &\!=\! \sum_{i=1}^{n} \norm{\Tgrd f_i^t\!-\!\grd f_i^t \!+\! \grd f_i^t \!-\!\frac{1}{n}\sum_{j=1}^{n}\grd f_j^t\!-\!\frac{1}{n}\sum_{j=1}^{n}\left(\Tgrd f_j^t\!-\!\grd f_j^t\right)}^2
\end{align*}
Taking the conditional expectation $\EE_t[\cdot]$ yields 
\begin{align*}
    & \EE_t \left[ \norm{{\bm U \bm U}^\top \Tgrd F^t}_F^2 \right] 
    \leq 2n\sigma^2 + \sum_{i=1}^{n}\norm{\grd f_i^t -\frac{1}{n}\sum_{j=1}^{n}\grd f_j^t}^2
    \\
    &\overset{(a)}{\leq} 2n ( \varsigma^2 + \sigma^2) + \frac{2L^2}{n}\sum_{i=1}^{n} \sum_{j=1}^n \norm{\prm_i^t-\prm_j^t}^2
    \\
    &\leq 2n ( \varsigma^2 + \sigma^2) + 8L^2 \norm{\CSE{t}}_F^2 
\end{align*}
where (a) is obtained by adding and subtracting $\grd f(\prm_i^t)$ and using A\ref{ass: smooth}, A\ref{ass:hete}. Substituting back into \eqref{eq:cseb} yields
\begin{align*}
    & \EE_t [\norm{\CSE{t+1}}_F^2] 
    \! \leq \! (1\!-\!\rho + \frac{8L^2\gamma_{t+1}^2}{\rho}) \norm{\CSE{t}}_F^2 \!+\! \frac{2n \gamma_{t+1}^2}{\rho}\left(\varsigma^2 \!+\! \sigma^2\right).
\end{align*}
Setting $\gamma_{t+1}\leq \frac{\rho}{4L} $ yields that 
\begin{align*}
    & \EE [\norm{\CSE{t+1}}_F^2] \leq \left(1- \frac{\rho}{2} \right) \EE[ \norm{\CSE{t}}_F^2] + \frac{\gamma_{t+1}^2}{\rho}2n\left(\varsigma^2 + \sigma^2\right).
\end{align*}
Solving the recursion and noticing that $\CSE{0}={\bm 0}$ yields
\begin{align*}
    \EE [\norm{\CSE{t+1}}_F^2]  
    &\leq \frac{2n(\varsigma^2 \!+\! \sigma^2)}{\rho}\sum_{i=1}^{t+1}\gamma_i^2 (1-\frac{\rho}{2})^{t+1-i} \\
    & \leq \frac{8n}{\rho^2}\gamma_{t+1}^2\left(\varsigma^2 \!+\! \sigma^2\right).
\end{align*}
where we have applied Lemma \ref{lem:auxil} in the last inequality.

\section{Proof of Lemma~\ref{lem:des}}

We develop the lemma starting from \eqref{eq:expF1} and focus on analyzing the last term therein. Using A\ref{ass:Hsmooth}, we observe that:
\begin{align*}
& \left\| \frac{1}{n} \sum_{i=1}^n \big( \grd f_i( \Bprm^t) - \grd f_i^t \big) \right\|^2  = \bigg\| \frac{1}{n} \sum_{i=1}^n \big( \grd f_i^t - \grd f_i( \Bprm^t)  
\\
&\qquad - \grd^2 f_i( \Bprm^t ) ( \prm_i^t - \Bprm^t)+ \grd^2 f_i( \Bprm^t ) ( \prm_i^t - \Bprm^t) \big) \bigg\|^2 \\
& \leq 2 \left\| \frac{1}{n} \sum_{i=1}^n \big( \grd f_i^t - \grd f_i( \Bprm^t)  - \grd^2 f_i( \Bprm^t ) ( \prm_i^t - \Bprm^t) \big) \right\|^2 \\
&\qquad + 2 \left\| \frac{1}{n} \sum_{i=1}^n \grd^2 f_i( \Bprm^t ) ( \prm_i^t - \Bprm^t) \right\|^2 \\
& \leq \frac{ L_{H}^2}{2n} \, \| \Prm^t - \BPrm^t \|_F^{4} + \frac{2}{n^2} \left\| \sum_{i=1}^n \grd^2 f_i( \Bprm^t ) ( \prm_i^t - \Bprm^t) \right\|^2,
\end{align*}
where the last inequality is due to A\ref{ass:Hsmooth} and \eqref{eq:nesterov}. 
Furthermore, by A\ref{ass:hete2}, we obtain
\begin{align*}
& \quad \!\bigg\|\! \sum_{i=1}^n \grd^2 f_i( \Bprm^t ) ( \prm_i^t - \Bprm^t) \bigg\|^2 \\
&\!\overset{(a)}{=}\! \left\| \sum_{i=1}^n ( \grd^2 f_i( \Bprm^t ) \!-\! \grd^2 f( \Bprm^t ) ) ( \prm_i^t\! -\! \Bprm^t) \right\|^2 \! \leq\! n \varsigma_H^2 \|\Prm^t \!-\! \BPrm^t \|_F^2,
\end{align*}
where (a) is due to $\sum_{i=1}^n \grd^2 f( \Bprm^t ) ( \prm_i^t - \Bprm^t ) = {\bm 0}$. 
This yields
\begin{align*}
& \left\| \frac{1}{n} \sum_{i=1}^n \big( \grd f_i( \Bprm^t) - \grd f_i( \prm_i^t) \big) \right\|^2 \\
& \leq \frac{1}{n} \left( \frac{ L_{H}^2}{2} \|\Prm^t - \BPrm^t \|_F^2 
+ \varsigma_H^2  \right) \|\Prm^t \!-\! \BPrm^t \|_F^2.
\end{align*}
Substituting back into \eqref{eq:expF} leads to
\beqq
\begin{aligned}
& \EE_t[ f( \Bprm^{t+1} ) ] \leq f( \Bprm^t ) - \frac{\gamma_{t+1}}{4} \| \grd f( \Bprm^t ) \|^2 + \frac{ \gamma_{t+1}^2 L \sigma^2}{2n}
\\
& \qquad + \frac{\gamma_{t+1}}{2n} \Big(  L_{H}^2 \|\Prm^t - \BPrm^t \|_F^{2} + 2 \varsigma_H^2  \Big) \|\Prm^t - \BPrm^t \|_F^{2}.
\end{aligned}
\eeqq
This concludes the proof.

\section{Proof of Lemma~\ref{lem:ceb-h}}
Similar to the proof of Lemma \ref{lem:ceb}, using A\ref{ass: graph} and \eqref{eq:consensuserr}, the following holds for any $\alpha, \beta> 0$,
\begin{align*}
\norm{\CSE{t+1}}^4_F & \leq  \bigg[ (1+\alpha) (1-\rho)^2\norm{{\bm U\bm U}^\top \Prm^t}_F^2 \\
&\quad \textstyle + \left(1+\frac{1}{\alpha}\right) \gamma_{t+1}^2\norm{{\bm U \bm U}^\top \Tgrd F^t}_F^2 \bigg]^2
\\
&\leq  (1+\beta)(1+\alpha)^2  (1-\rho)^4\cdot \norm{\CSE{t}}_F^4 \\
&\quad \textstyle + (1+\frac{1}{\beta})(1+\frac{1}{\alpha})^2  \gamma_{t+1}^{4} \norm{{\bm U}{\bm U}^\top\Tgrd F^t}^4_F .
\end{align*}
Setting $\alpha =\beta = \frac{\rho}{1-\rho}$, then $1+1/\alpha = 1+1/\beta = 1/\rho$, which leads to the following
\begin{align}\label{eq:bb}
\norm{\CSE{t+1}}^4_F \leq (1-\rho)\norm{\CSE{t}}_F^4 + \frac{\gamma_{t+1}^4}{\rho^3}\norm{{\bm U}{\bm U}^\top\Tgrd F^t}_F^4.
\end{align}
The second term can be expanded as:
\begin{align*}
&\norm{{\bm U}{\bm U}^\top\Tgrd F^t}_F^4 = \left( \sum_{i=1}^{n}\norm{\Tgrd f_i^t - \frac{1}{n}\sum_{j=1}^{n}\Tgrd f_j^t}^2 \right)^2.
\end{align*}
Adding and subtracting $\grd f_i^t, \frac{1}{n}\sum_{j=1}^n\grd f_j^t$ leads to the upper bound:
\begin{align*}
 \norm{{\bm U}{\bm U}^\top\Tgrd F^t}_F^4 & \leq \textstyle 27 n \sum_{i=1}^{n}\Bigg[ \norm{\frac{1}{n}\sum_{j=1}^{n}(\Tgrd f_j^t -\grd f_j^t)}^4 \\
&\textstyle + \norm{\Tgrd f_i^t - \grd_i^t}^4  + \norm{\grd f_i^t-\frac{1}{n}\sum_{j=1}^{n}\grd f_j^t}^4 \Bigg],
\end{align*}
where we have used $(a+b+c)^4\leq 27(a^4+b^4+c^4)$. Using the Cauchy-Schwarz inequality, we bound the last term in the above inequality as:
    \begin{align*}
            \norm{\frac{1}{n}\sum_{j=1}^{n}(\Tgrd f_j^t\!-\!\grd f_j^t)}^4 
            &\leq \frac{1}{n^4} \left(n\sum_{j=1}^{n} \norm{\Tgrd f_j^t-\grd f_j^t }^2\right)^2
            \\
            &\leq \frac{1}{n} \sum_{j=1}^{n} \norm{\Tgrd f_j^t-\grd f_j^t}^4
    \end{align*}
    By A\ref{ass:SecOrdMom2}, we observe that
    \begin{align*}
        &\EE_{t}\norm{{\bm U}{\bm U}^\top\Tgrd F^t}_F^4 \leq  27n \sum_{i=1}^{n}\bigg[ 2 \sigma^4 + 
        \norm{\grd f_i^t-\frac{1}{n}\sum_{j=1}^{n}\grd f_j^t}^4\bigg]
    \end{align*}
    Adding and subtracting $\grd f(\prm_i^t) = \frac{1}{n}\sum_{j=1}^{n}\grd f_j(\prm_i^t)$ in the last term leads to
    \begin{align*}
        \EE_t\norm{{\bm U}{\bm U}^\top\Tgrd F^t}_F^4 & \leq 27 n \bigg\{2 n \sigma^2 + 8\sum_{i=1}^{n} \norm{\grd f_i^t \!-\! \grd f(\prm_i^t)}^4 \!
        \\
        &\quad +\textstyle  8\sum_{i=1}^{n} \norm{\frac{1}{n}\sum_{j=1}^{n}\grd f_j(\prm_i^t)\!-\!\grd f_j^t)}^4 \bigg\}
    \end{align*}
    Applying A\ref{ass: smooth}, A\ref{ass:hete} yields
    \begin{align*}
    \!\EE_t\norm{{\bm U}{\bm U}^\top\Tgrd F^t}_F^4 \!\leq\! 27 n \!\left\{\! 2 n \sigma^4 \!+\! 8n\varsigma^4 + \frac{8 L^4}{n} \!{\sum_{i, j=1}^{n}\!} \norm{\prm_i^t - \prm_j^t}^4 \!\right\}\!.
    \end{align*}
    Observe that the last terms are upper bounded by the consensus error $2\norm{\BPrm^t-\Prm^t}^4$. We obtain:
    \begin{align*}
        \EE_t\norm{{\bm U}{\bm U}^\top\Tgrd F^t}_F^4 & \leq 54 n^2 \Bigg\{  \sigma^4\! +\! 4 \varsigma^4 \!+\! \frac{64 L^4}{n^2} \norm{\BPrm^t-\Prm^t}_F^4\!\Bigg\}.
    \end{align*}
    Substituting the upper bound of $\norm{{\bm U}{\bm U}^\top\Tgrd F^t}_F^4$  to inequality (\ref{eq:bb}), we have that
    \begin{align*}
        \EE \norm{\CSE{t+1}}^4_F & \leq \left(1 -\rho +\frac{\gamma_{t+1}^4}{\rho^3} 3456 L^4\right)\EE\norm{\CSE{t}}_F^4 
        \\
        &\quad + 54 n^2 (\sigma^4 + 4 \varsigma^4)\frac{\gamma_{t+1}^4}{\rho^3}
    \end{align*}
    With the stepsize condition $\gamma_{t+1}\leq \frac{\rho}{9 L}$, we obtain 
    \begin{align*}
        \EE\norm{\CSE{t+1}}^4_F & \leq \left( 1 -\frac{\rho}{2} \right)\EE\norm{\CSE{t}}_F^4  + 54n^2 (\sigma^4 + 4 \varsigma^4)\frac{\gamma_{t+1}^4}{\rho^3}.
    \end{align*}
    Solving the recursion and noticing that $\prm_i^0=\prm_j^0$ give
    \begin{align*}
        \EE\norm{\CSE{t+1}}^4_F \leq \frac{54n^2 (\sigma^4 \!+\! 4 \varsigma^4)}{\rho^3} \sum_{j=1}^{t+1}\gamma_{j}^4\left(1-\frac{\rho}{2}\right)^{t+1-j}
    \end{align*}
    Applying Lemma \ref{lem:auxil}, we have
    \begin{align*}
        \EE\norm{\CSE{t+1}}^4_F \leq \frac{216(\sigma^4 + 4 \varsigma^4)n^2 }{\rho^4}\gamma_{t+1}^4.
    \end{align*}
    This concludes the proof.

\section{An Auxiliary Lemma}
\begin{lemma}\label{lem:auxil}
Let $\{\gamma_{t}\}_{t\geq 1}$ be an arbitrary sequence of non-negative number and let $b \in \RR^+$ be given, 
\begin{itemize}[leftmargin=*]
    \item If $\frac{\gamma_t^2}{\gamma_{t+1}^2}\leq 1+b \gamma_{t+1}^2$, $\forall t$, and $\sup_{t \geq 1} \gamma_{t} \leq \sqrt{\frac{\rho /4}{b(1-\rho /2)}}$. The following holds:
    \beq\label{lemeq:auxil_od2} 
        \sum_{i=1}^{t+1}\gamma_i^2 (1-\frac{\rho}{2})^{t+1-i} \leq \frac{4}{\rho} \gamma_{t+1}^2
    \eeq
    \item If $\frac{\gamma_t^4}{\gamma_{t+1}^4}\leq 1+b \gamma_{t+1}^4$, $\forall t$, and $\sup_{t \geq 1} \gamma_{t} \leq \sqrt[4]{\frac{\rho /4}{b(1-\rho /2)}}$. The following holds:
    \beq\label{lemeq:auxil_od4}
     \sum_{i=1}^{t+1}\gamma_i^4 (1-\frac{\rho}{2})^{t+1-i} \leq \frac{4}{\rho} \gamma_{t+1}^4
    \eeq
\end{itemize}
\end{lemma}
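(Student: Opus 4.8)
The plan is to bound each summand $\gamma_i^2$ by $\gamma_{t+1}^2$ up to a controlled geometric factor, and then collapse the resulting geometric series. First I would iterate the ratio hypothesis $\gamma_t^2/\gamma_{t+1}^2 \leq 1 + b\gamma_{t+1}^2$ downward in the index: applying $\gamma_j^2 \leq (1+b\gamma_{j+1}^2)\gamma_{j+1}^2$ repeatedly for $j=i,i+1,\dots,t$ yields the telescoping bound $\gamma_i^2 \leq \gamma_{t+1}^2 \prod_{j=i}^{t}(1+b\gamma_{j+1}^2)$ for every $i \leq t+1$, where the empty product (at $i=t+1$) is understood as $1$.

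Next, the step-size condition $\sup_{t\geq 1}\gamma_t \leq \sqrt{\tfrac{\rho/4}{b(1-\rho/2)}}$ is used to control each factor of this product uniformly. Squaring gives $b\gamma_{j+1}^2 \leq \tfrac{\rho/4}{1-\rho/2}$, hence $1+b\gamma_{j+1}^2 \leq 1+\tfrac{\rho/4}{1-\rho/2} = \tfrac{1-\rho/4}{1-\rho/2}$. Since the product in the previous step contains exactly $t+1-i$ factors, this produces the clean estimate $\gamma_i^2 \leq \gamma_{t+1}^2 \bigl(\tfrac{1-\rho/4}{1-\rho/2}\bigr)^{t+1-i}$.

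The final step is to insert this into the weighted sum in \eqref{lemeq:auxil_od2}. The crucial algebraic cancellation is that the geometric weight $(1-\rho/2)^{t+1-i}$ exactly absorbs the denominator of the factor above, leaving $\gamma_i^2 (1-\rho/2)^{t+1-i} \leq \gamma_{t+1}^2 (1-\rho/4)^{t+1-i}$. Summing over $i=1,\dots,t+1$ (re-indexing by $k=t+1-i$) and bounding the truncated geometric series by its infinite counterpart, $\sum_{k=0}^{\infty}(1-\rho/4)^k = \tfrac{1}{1-(1-\rho/4)} = \tfrac{4}{\rho}$, gives exactly the claimed bound $\tfrac{4}{\rho}\gamma_{t+1}^2$.

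Part two follows by the identical argument after replacing every square by a fourth power: the hypothesis $\gamma_t^4/\gamma_{t+1}^4 \leq 1+b\gamma_{t+1}^4$ iterates the same way, and the stepsize bound $\sup_t \gamma_t \leq \sqrt[4]{\tfrac{\rho/4}{b(1-\rho/2)}}$ produces the same per-factor bound $b\gamma_{j+1}^4 \leq \tfrac{\rho/4}{1-\rho/2}$, so the geometric collapse is unchanged. I expect no serious obstacle here, as this is a routine geometric-series estimate; the only point requiring care is verifying that the step-size threshold is calibrated precisely so that the effective decay rate equals $1-\rho/4 < 1$ after the cancellation, which is exactly what makes the geometric sum equal $\tfrac{4}{\rho}$ and closes the bound with the stated constant.
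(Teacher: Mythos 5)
Your proposal is correct and follows essentially the same route as the paper's own proof: telescope the ratio condition to write $\gamma_i^2 \leq \gamma_{t+1}^2\prod_{j=i}^{t}(1+b\gamma_{j+1}^2)$, use the step-size threshold to turn each factor times $(1-\rho/2)$ into $(1-\rho/4)$, and sum the geometric series to get $4/\rho$. The paper presents the same cancellation by writing $(1+b\gamma^2)(1-\rho/2)\leq 1-\rho/4$ directly inside the sum, so there is no substantive difference.
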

\begin{proof}
Observe the following chain:
\beqq
\begin{aligned}
      & \sum_{i=1}^{t+1}\gamma_i^2 (1-\frac{\rho}{2})^{t+1-i} \\ &\leq  \sum_{i=1}^{n}\frac{\gamma_{i}^2}{\gamma_{i+1}^2}\cdot \frac{\gamma_{i+1}^2}{\gamma_{i+2}^2}\cdot \frac{\gamma_{t}^2}{\gamma_{t+1}^2}\cdot \gamma_{t+1}^2(1-\frac{\rho}{2})^{t+1-i} 
      \\
      &\leq \gamma_{t+1}^2 \sum_{i=1}^{t+1} \left[ (1+b \gamma_{i+1}^2)(1-\frac{\rho}{2})\right]^{t+1-i} 
      \\
      & \leq \gamma_{t+1}^2\sum_{i=1}^{t+1} (1-\frac{\rho}{4})^{t+1-i} \leq \frac{4\gamma_{t+1}^2}{\rho}
\end{aligned}
\eeqq
where the last inequality is due to the condition $\gamma_{i+1} \leq \sqrt{\frac{\rho /4}{b(1-\rho /2)}}$. This concludes the proof for \eqref{lemeq:auxil_od2}. 

For the fourth order case in \eqref{lemeq:auxil_od4}, we omit the proof due to space limitation. Notice that it follows directly from the above derivation on the second order case.
\end{proof}

\fi

\end{document}